\documentclass[11pt]{article}

\usepackage{epsfig,amsmath,latexsym,amssymb}
\usepackage{amscd }
\usepackage{multirow}
\usepackage{graphicx}
\usepackage{enumitem}
\pagestyle{plain}
\pagestyle{plain}
\oddsidemargin0cm
\topmargin-0.1cm
\textheight22.8cm
\textwidth16cm
\parindent0cm

\newcounter{thm}
\setcounter{thm}{0}
\newcounter{ex}
\setcounter{ex}{0}
\newcounter{re}
\setcounter{re}{0}
\newtheorem{Theorem}[thm]{Theorem}
\newtheorem{Lemma}[thm]{Lemma}

\newtheorem{Proposition}[thm]{Proposition}

\newtheorem{Corollary}[thm]{Corollary}
\newtheorem{Definition}[thm]{Definition}
\newenvironment{proof}[1][\unskip]{\addvspace{\baselineskip} \footnotesize \textbf{Proof #1.}}{ \hfill  $\square$ \par\addvspace{\baselineskip}}
\newenvironment{sketch}[1][\unskip]{\addvspace{\baselineskip}\footnotesize \textbf{Sketch #1.}}{ \hfill  $\square$ \par\addvspace{\baselineskip}}

\def\N{{\mathbb N}}  
\def\p{{\mathbb P}}  
\def\Z{{\mathbb Z}}

\def\E{{\mathbb E}}

\def\connected{\leftrightarrow}
\def\occupied{\Leftrightarrow}


\begin{document}
\title{Inhomogeneous Long-Range Percolation for Real-Life Network Modeling}

\author{Philippe Deprez\footnote{RiskLab, Department of Mathematics, 
ETH Zurich, 8092 Zurich, Switzerland} \qquad
Rajat Subhra Hazra\footnote{Indian Statistical Institute, Theoretical Statistics and Mathematics Unit, Kolkata 700 108, India} \qquad
Mario V.~W\"uthrich$^\ast$\footnote{Swiss Finance Institute SFI Professor, 8006 Zurich, Switzerland}}
\date{\today}
\maketitle

\begin{abstract}
The study of random graphs has become very
popular for real-life network modeling such as 
social networks or financial networks.
Inhomogeneous long-range percolation (or scale-free percolation) 
on the lattice $\mathbb Z^d$, $d\ge1$, is a particular
attractive example of a random graph model because
it fulfills several stylized facts of real-life networks.
For this model various geometric properties such as
the  percolation behavior, the degree distribution
and graph distances 
have been analyzed. In the present
paper we complement the picture about graph distances. Moreover, we prove continuity of the
percolation probability in the phase transition point.
\end{abstract}

{\it Keywords:} Network modeling; stylized facts of real-life networks; small-world effect;
long-range percolation; scale-free percolation; graph distance; 
phase transition;
continuity of percolation probability; inhomogeneous long-range percolation; infinite connected component


\section{Introduction}

Random graph theory has become very popular to 
model real-life networks. Real-life networks may be understood as sets of particles 
that are possibly linked with each other. 
Such networks appear for example as virtual social networks,
see \cite{NSW}, or financial networks such as 
the banking system where banks exchange lines of credits 
with each other, see \cite{cont1} and \cite{Cont2}. 
Many different random graph models have been developed in recent years 
in order to understand the geometry of such networks. 
Using empirical data one has observed several
stylized facts about large real-life networks, for a detailed outline 
we refer to \cite{NSW} and Section 1.3 in \cite{Durrett}:
\begin{itemize}
\item
Distant particles are typically connected by very few 
links. This is called the ``small-world effect''. For example, there is 
the observation that most particles in real-life networks are connected  
by at most six links, see also \cite{Watts}.
\item
Linked particles tend to have common friends. This is called the 
``clustering property''.
\item
The degree distribution,  that is, the distribution of the number of links of a given particle,  
is heavy-tailed, i.e.~its survival probability has a power law decay. 
It is observed that in real-life networks the (power law) tail parameter $\tau$ 
is often between $1$ and $2$, for instance, for the movie actor network $\tau$
is estimated to be around $1.3$. For more explicit examples we refer to
\cite{Durrett}.
\end{itemize}
A well studied model in the literature is the homogeneous long-range percolation 
model on $\Z^d$, $d\ge1$. In this model, the particles are the vertices 
of $\Z^d$. Any two particles $x, y\in\Z^d$ are linked 
with probability $p_{xy}$ which behaves  as
$\lambda |x-y|^{-\alpha}$ for $|x-y|\to \infty$. 
This model has by definition of $p_{xy}$ a local clustering property. 
Moreover, if $\alpha \in(d,2d)$ the graph distance between 
$x, y\in\Z^d$, that is, 
the minimal number of links that connect $x$ and $y$, behaves roughly 
as $(\log|x-y|)^{1/\log_2(2d/\alpha)}$ as $|x-y|\to \infty$, see \cite{biskup}.
This behavior can be interpreted as a version of the small-world effect. 
But homogeneous long-range percolation does not fulfill the stylized fact of having a heavy-tailed 
degree distribution. Therefore, 
\cite{Remco} introduced the inhomogeneous long-range percolation 
model (also known as scale-free percolation model) on $\Z^d$ 
which extends the homogeneous model in such a way that 
the degree distribution turns out to be heavy-tailed. 
In the inhomogeneous long-range percolation model one assigns to each 
particle $x\in\Z^d$ a positive random weight 
$W_x$ whose distribution is heavy-tailed with 
tail parameter $\beta>0$. These weights make particles more or less attractive, i.e.~if
a given particle $x$ has a large weight $W_x$ then it plays the role of a hub
in the network. Given 
these weights, two particles $x, y\in\Z^d$ are then linked with 
probability $p_{xy}$ which is approximately 
$\lambda W_x W_y |x-y|^{-\alpha}$ for large $|x-y|$. 
For $\min\{\alpha,\beta\alpha\}>d$ the 
degree distribution  is heavy-tailed 
with tail parameter $\tau=\beta\alpha/d>1$, see Theorem 2.2 in \cite{Remco}. 
Hence, this model fulfills the stylized fact of having a 
heavy-tailed degree distribution. 
For real-life applications the interesting case is $\tau=\beta\alpha/d\in (1,2)$, 
and in this case, if in addition $\alpha>d$, 
the graph distance between $x,y \in\Z^d$ 
is of order $\log\log|x-y|$ as $|x-y|\to\infty$, see \cite{Remco} and Theorem 8 below. 
This is again a version of the small-world effect. 
One goal of this paper is to complement the 
picture about graph distances of \cite{Remco}
by providing analogous results to \cite{ Berger2, biskup, biskup11, trapman:2010} 
for inhomogeneous long-range percolation. 

~

In  homogeneous long-range percolation it is known 
that there is a critical constant $\lambda_c=\lambda_c(\alpha, d)$ such
that there is an infinite connected component of 
particles for $\lambda>\lambda_c$ and there is no
such component for $\lambda < \lambda_c$, i.e.~in the former case there
is an infinite connected network in $\Z^d$. 
This phase transition picture in homogeneous long-range percolation
can be traced back to the work of \cite{aizenman:newman, newman:schulman, schulman}. 
Later work concentrated more on the geometrical properties of percolation like graph  distances, see 
\cite{benjamini:berger, biskup, biskup11, CGS, trapman:2010}. 
A good overview of the literature for long-range percolation is provided in 
\cite{biskup, chadey}. 
For homogeneous long-range percolation it is known that for 
$\alpha\le d$ there is an infinite connected component for all $\lambda>0$, and
therefore $\lambda_c=0$. This infinite connected component contains all particles 
of $\Z^d$, a.s., i.e.~in
that case we have a completely connected network of all particles of $\Z^d$.
The case $\alpha\in(d,2d)$ is treated in \cite{Berger}. In
that case there is no infinite connected component at criticality $\lambda_c$.
This result combined with Proposition 1.3 of \cite{akn} 
shows continuity of the percolation probability, 
that is, the probability that a given particle belongs 
to an infinite connected component. For $\alpha\ge 2d$, the problem is still open, 
except in the case $d=1$ and $\alpha>2$ because in that latter case there 
does not exist an infinite connected component for any $\lambda>0$. 

In  inhomogeneous long-range percolation the conditions for the existence of a non-trivial 
critical value $\lambda_c \in (0,\infty)$  
were derived in \cite{Remco}, see also Theorems \ref{theo 31} and \ref{theo 32} below. 
The continuity of the percolation probability was conjectured in 
that article. One main goal of the present work is to prove this conjecture for $\alpha \in(d,2d)$. 
The crucial technique to prove this conjecture is the renormalization method presented in 
\cite{Berger}. This technique will also allow to complement the picture of graph
distances provided in \cite{Remco}.

~

\noindent
{\bf Organization of this article.}  In Section~\ref{sec:model}, we describe the model assumptions and notations. We also state the conditions that are required for a non-trivial phase transition. In Section~\ref{sec:results}, we state the main results of the article. Namely, we show the continuity of the percolation function in Theorem~\ref{theorem continuous} which is based on a finite box estimate stated in Theorem~\ref{continuity}. We also 
complement the picture about  graph distances of \cite{Remco}, see
Theorem \ref{chemical distance} below. In Section~\ref{sec:discussion}, we discuss open 
problems and compare the results to homogeneous
long-range percolation model results. Finally, we provide 
all proofs of our results in Section~\ref{sec:proofs}.

\section{Model assumptions and phase transition picture}\label{sec:model}

We define the inhomogeneous long-range percolation model of \cite{Remco}
in a slightly modified version. The reason for this modification is
that the model becomes easier to handle but it keeps the essential
features of inhomogeneous long-range percolation.
In particular, all  results of \cite{Remco} only depend on the asymptotic behavior
of survival probabilities. Therefore, we choose an explicit example which on the one
hand has the right asymptotic behavior and on the other hand is easy to handle. 
This, of course, does not harm the generality of the results.

Consider the lattice $\Z^d$ for fixed $d\ge 1$ with vertices $x\in \Z^d$
and edges $(x,y)$ for $x,y\in \Z^d$. Assume $(W_x)_{x\in \Z^d}$
are i.i.d.~Pareto distributed weights
with parameters $\theta=1$ and $\beta>0$, i.e.,~the weights $W_x$ have i.i.d.~survival probabilities
\begin{equation*}
\p \left[W_x > w\right] = w^{-\beta}, \qquad
\text{ for $w\ge 1$.}
\end{equation*}
Conditionally given these weights $(W_x)_{x\in \Z^d}$,
we assume that edges $(x,y)$ are independently from each other either
occupied or vacant. The conditional
probability of an occupied edge between
$x$ and $y$ is
chosen as
\begin{equation}\label{edge probability 2}
p_{xy} = 1 - \exp \left( -\frac{\lambda W_xW_y}{|x-y|^{\alpha}}
\right),\qquad \text{ for fixed given parameters $\alpha, \lambda \in (0,\infty)$}.
\end{equation}
For $|\cdot|$ we choose the Euclidean norm. 
If there is an occupied edge between $x$ and $y$ we write $x \occupied y$;
if there is a finite connected path of occupied edges between
$x$ and $y$ we write $x \connected y$ and we say
that $x$ and $y$ are connected. Clearly $\{ x \occupied y \}
\subset \{ x \connected y \}$. We define the cluster 
of $x\in \Z^d$ to be the connected component
\begin{equation*}
{\mathcal C}(x)=\{y \in \Z^d;~x\connected y\}.
\end{equation*}
Our aim is to study the size of the cluster ${\mathcal C}(x)$ and
to investigate its percolation properties
as a function of $\lambda>0$ and $\alpha>0$, 
that is, as a function of the edge probabilities
$(\lambda, \alpha) \mapsto
p_{xy}= p_{xy}(\lambda,\alpha)$. Note that ${\mathcal C}(x)$ exactly denotes
all particles $y\in \Z^d$ which can be reached within the network
whose links are described by the occupied edges.
The percolation probability
is defined by
\begin{equation*}
\theta(\lambda,\alpha) = \p \left[|{\mathcal C}(0)|=\infty\right].
\end{equation*}
This is non-decreasing in $\lambda$ and non-increasing in $\alpha$.
For given $\alpha>0$, the critical value $\lambda_c(\alpha)$ is
defined as
\begin{equation*}\lambda_c=
\lambda_c(\alpha) = \inf \left\{ \lambda>0;~\theta(\lambda,\alpha)>0\right\}.
\end{equation*}
Note that $\theta(\lambda,\alpha)$ and $\lambda_c(\alpha)$ 
also depend on $\beta$, but this parameter will be kept fixed. 

\noindent{\it Trivial case.} For $\min \{ \alpha, \beta \alpha \}\le d$,
we have $\lambda_c=0$.
This comes from the fact that for any $\lambda>0$
\begin{equation*}
\p\left[|\{y \in \Z^d;~0\occupied y\}|=\infty\right]=1,
\end{equation*}
see Theorem 2.1 in \cite{Remco}. This says that the degree distribution of a given
vertex is infinite, a.s., and therefore there is an infinite connected 
component, a.s. 
For this reason we only consider the non-trivial case
$\min \{ \alpha, \beta \alpha \}> d$ (where a phase transition may occur). 
In this latter case the degree distribution is heavy-tailed with tail 
parameter $\tau=\beta\alpha/d>1$, see Theorem 2.2 of \cite{Remco}. 

\begin{Theorem}[upper bounds] \label{theo 31}
Fix $d\ge 1$. Assume
$\min \{ \alpha, \beta \alpha \} > d$.
\begin{itemize}
\item[(a)] If $d\ge 2$, then $\lambda_c<\infty$.
\item[(b)]  If $d=1$ and $\alpha \in (1,2]$, then $\lambda_c<\infty$.
\item[(c)]  If $d=1$ and $\min \{ \alpha, \beta \alpha \} > 2$,
 then $\lambda_c=\infty$.
\end{itemize}
\end{Theorem}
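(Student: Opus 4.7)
Parts (a) and (b) will follow from stochastic domination by well-understood percolation models, while (c) will be obtained by an expected-crossing / block renormalization argument analogous to Newman--Schulman's treatment of homogeneous $1$d long-range percolation with $\alpha>2$.

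For (a) and (b) the key remark is $W_x\ge 1$ almost surely. In (a) this forces $p_{xy}\ge 1-e^{-\lambda}$ for every nearest-neighbor pair, so the subgraph of nearest-neighbor edges stochastically dominates Bernoulli bond percolation on $\Z^d$ with parameter $1-e^{-\lambda}$. Since the classical critical probability $p_c^{\mathrm{NN}}(\Z^d)$ is strictly less than one for every $d\ge 2$, choosing $\lambda$ large enough produces an infinite cluster and hence $\lambda_c<\infty$. In (b) we instead use $p_{xy}\ge 1-\exp(-\lambda|x-y|^{-\alpha})$, the edge probability of the homogeneous $1$d long-range model, which is known to percolate for large $\lambda$ when $\alpha\in(1,2]$ by Newman--Schulman (with $\alpha=2$ handled by Aizenman--Newman). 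Stochastic domination transfers percolation to our inhomogeneous model.

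For (c), fix $\lambda>0$. Using $1-e^{-t}\le t\wedge 1$ one bounds $\E[p_{xy}]\le \E[(\lambda W_xW_y/|x-y|^\alpha)\wedge 1]$. When $\beta>1$, $\E[W]<\infty$ gives $\E[p_{xy}]\le C|x-y|^{-\alpha}$; when $\beta\le 1$, the product tail $\p[W_xW_y>t]\sim\beta t^{-\beta}\log t$ leads, after integration, to $\E[p_{xy}]=O(|x-y|^{-\alpha\beta}\log|x-y|)$. In either sub-case the effective decay exponent $\gamma:=\min(\alpha,\alpha\beta)$ strictly exceeds $2$. We now run a Newman--Schulman-style block renormalization: partition $\Z$ into blocks of length $L$, and note that the union bound yields $\p[B_0\sim B_k]\le\sum_{x\in B_0,\,y\in B_k}\E[p_{xy}]=O(L^2(kL)^{-\gamma})=O(L^{2-\gamma}k^{-\gamma})$ for $k\ne 0$. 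Summing over $k$ gives an expected block-degree of order $L^{2-\gamma}$, which tends to zero as $L\to\infty$. For $L$ large enough the induced block graph is subcritical, forcing clusters in the original model to be finite for every $\lambda$, i.e.\ $\lambda_c=\infty$.

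The main obstacle is (c): one cannot stochastically dominate our model by a homogeneous one because the weights $W_x$ are unbounded, so both the tail estimate $\E[p_{xy}]=O(|x-y|^{-\gamma})$ and the block argument must be carried out in expectation and then iterated to absorb the inhomogeneity. The regime $\beta\le 1$ is especially delicate, since there $\E[W]=\infty$ and rare atypically large weights must not overwhelm the renormalization; retaining the logarithmic factor in the product tail and verifying that $\gamma>2$ still drives $L^{2-\gamma}\to 0$ is what makes the argument go through.
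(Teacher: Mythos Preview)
Your proofs of (a) and (b) are correct and essentially match what the paper does: the paper also uses $W_x\ge 1$ to dominate the homogeneous long-range model and then invokes the known percolation results for that model. Your version of (a) is actually slightly more elementary, since you pass directly to nearest-neighbor Bernoulli percolation rather than to homogeneous long-range percolation; both routes are fine.

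For (c) the paper gives no argument and simply cites Theorem~3.1 of \cite{Remco}, so your attempt to produce a self-contained proof is genuinely adding content. Your moment estimate is right: using $1-e^{-t}\le t\wedge 1$ and the Pareto product tail one indeed gets $\E[p_{xy}]\le C|x-y|^{-\gamma+o(1)}$ with $\gamma=\min\{\alpha,\alpha\beta\}>2$ (the logarithmic correction for $\beta\le 1$ is harmless). The gap is in the conclusion you draw from it. The claimed bound
\[
\p[B_0\sim B_k]\;\le\;\sum_{x\in B_0,\,y\in B_k}\E[p_{xy}]\;=\;O\bigl(L^2(kL)^{-\gamma}\bigr)
\]
is only valid when the blocks $B_0$ and $B_k$ are separated by distance of order $|k|L$, i.e.\ for $|k|\ge 2$. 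For $|k|=1$ the blocks are adjacent and contain pairs at distance $1$, so the right-hand side is of order $L$, not $L^{2-\gamma}$; in fact $\p[B_0\sim B_1]\to 1$ as $L\to\infty$ for any fixed $\lambda>0$. Hence the expected block-degree does \emph{not} tend to zero, and ``the induced block graph is subcritical'' fails as stated.

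The fix is to count \emph{crossings} rather than block-to-block adjacencies: for the interval $[0,L]$ let $N_L$ be the number of occupied edges $(x,y)$ with $x<0$ and $y>L$. Then the same moment bound gives
\[
\E[N_L]\;=\;\sum_{x<0}\sum_{y>L}\E[p_{xy}]\;=\;O\!\left(\sum_{m>L}(m-L)\,m^{-\gamma+o(1)}\right)\;=\;O\bigl(L^{2-\gamma+o(1)}\bigr)\;\longrightarrow\;0,
\]
so for $L$ large $\p[N_L=0]>0$, meaning with positive probability the interval $[0,L]$ is not crossed by any occupied edge. By translation invariance and ergodicity (weights are i.i.d.\ and edges are conditionally independent) such ``cut intervals'' occur with positive density on both sides of the origin, forcing $|\mathcal{C}(0)|<\infty$ a.s.\ for every $\lambda>0$. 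This is the one-dimensional mechanism you were aiming for; it just has to be phrased in terms of crossings, not block degree.
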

\noindent
Since $W_x\ge 1$ , a.s., the edge probability stochastically dominates 
a configuration with independent edges being occupied with
probabilities $1- \exp(-\lambda |x-y|^{-\alpha})$. The latter is the
homogeneous long-range percolation model on $\Z^d$ and
it is well known that this model percolates 
(for $d\ge 2$ see \cite{Berger}; for $d=1$ and $\alpha\in(1, 2]$ see \cite{newman:schulman}).
For part (c) of the theorem we refer to Theorem 3.1 of \cite{Remco}. The next theorem follows
from Theorems 4.2 and 4.4 of \cite{Remco}.

\begin{Theorem}[lower bounds] \label{theo 32}
Fix $d\ge 1$. Assume
$\min \{ \alpha, \beta \alpha \} > d$.
\begin{itemize}
\item[(a)] If $\beta \alpha <2d$, then $\lambda_c = 0$.
\item[(b)] If $\beta \alpha >2d$, then $\lambda_c > 0$.
\end{itemize}
\end{Theorem}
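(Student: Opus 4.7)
\begin{sketch}
Both bounds come from \cite[Theorems 4.2 and 4.4]{Remco}; my plan is to follow their ideas.

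For \emph{part (a)}, I aim to produce an infinite cluster at arbitrarily small $\lambda>0$ by exploiting hubs. Fix $\lambda>0$ and a large threshold $K$, and let $V_K = \{x\in\Z^d : W_x>K\}$; this is an i.i.d.\ site-Bernoulli$(K^{-\beta})$ field, and by~\eqref{edge probability 2} any two $x,y\in V_K$ are connected by an occupied edge with probability at least $1-\exp(-\lambda K^2|x-y|^{-\alpha})$. Next I would tile $\Z^d$ into cubes of side $L$ with $L^d K^{-\beta}\gg 1$, so that every cube contains a $V_K$-representative with very high probability. The resulting coarse graph on cubes then stochastically dominates a homogeneous long-range percolation with effective intensity $\sim\lambda K^2 L^{-\alpha}$ and exponent $\alpha$, and the assumption $\beta\alpha<2d$ is exactly what will allow a choice $K\to\infty$ along $L=L(K)$ for which this coarse model percolates; this yields $\theta(\lambda,\alpha)>0$ for every $\lambda>0$.

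For \emph{part (b)}, the plan is to show that for $\lambda$ small the cluster $\mathcal{C}(0)$ is almost surely finite. First, a Laplace-type computation with the Pareto law gives
$$g(c) := \E\!\left[1-e^{-cW}\right] \;\asymp\; c^{\min(1,\beta)} \qquad (c\to 0),$$
so that the conditional expected degree of the origin equals
$$\E[\deg(0)\mid W_0] = \sum_{y\neq 0} g\!\left(\lambda W_0|y|^{-\alpha}\right) \;\asymp\; (\lambda W_0)^{d/\alpha},$$
and the unconditional edge probability $q_{0y}=\p[0\occupied y]$ decays as $|y|^{-\min(\alpha,\alpha\beta)}$, which is summable under the standing assumption. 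I would then dominate the cluster-exploration by a multi-type branching process indexed by vertex weights, in which a vertex of weight $w$ produces $\asymp(\lambda w)^{d/\alpha}$ offspring, each with a size-biased Pareto weight distribution. The hypothesis $\beta\alpha>2d$ is precisely what makes the $(2d/\alpha)$-th moment of $W$ finite, so that the mean matrix of this majorant has finite norm; for $\lambda$ small enough the norm drops below $1$, subcriticality forces $|\mathcal{C}(0)|<\infty$ a.s., and $\lambda_c>0$.

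The hard part in (a) will be the case $\alpha>2d$ (not ruled out by $\beta\alpha<2d$), where unit-density homogeneous long-range percolation need not percolate: the hub construction then has to be iterated across a hierarchy of scales with diverging thresholds $K_n$, exactly as in \cite{Remco}. The hard part in (b) is handling $\beta<1$, where $W$ has infinite mean and a direct length-$n$ path-counting majorant $\lambda^n\E[W^2]^{n-1}$ breaks down; tracking the branching majorant through the $(2d/\alpha)$-th moment of $W$ instead of through $\E[W]$ is what allows the argument to close at the sharp threshold $\beta\alpha>2d$.
\end{sketch}
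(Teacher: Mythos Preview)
The paper does not prove this theorem itself but simply cites Theorems~4.2 and~4.4 of \cite{Remco}, which is exactly what you do; your additional sketch of the arguments behind those theorems (hub-based coarse-graining for (a), fractional-moment path counting/branching majorant for (b), with the iterated multi-scale construction needed in (a) when $\alpha>2d$) is correct in outline and follows \cite{Remco}. One tiny slip: in (b) the naive majorant $\lambda^n\E[W^2]^{n-1}$ already fails for all $\beta\le 2$, not only for $\beta<1$, but your fractional-moment fix via the $2a$-th moment with $a$ just above $d/\alpha$ (finite precisely when $\beta\alpha>2d$) is the right remedy.
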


\noindent
Theorems \ref{theo 31} and \ref{theo 32} give the phase
transition pictures for $d\ge1$, see Figure \ref{Picture: Phase} for an illustration. 
They differ for $d=1$ and $d\ge 2$ in that
the former has a region where $\lambda_c=\infty$ and the latter
does not. 
Note that $\beta\alpha<2d$ corresponds to  infinite variance of the degree distribution 
and $\beta\alpha>2d$ to finite variance of the degree distribution. In particular,
for the interesting case $\tau = \beta\alpha/d \in (1,2)$ we have $\lambda_c=0$, which implies that
for any $\lambda>0$ the network will have an infinite connected component, a.s.

\begin{figure}[htb!]
\begin{center}
\includegraphics[height=8cm]{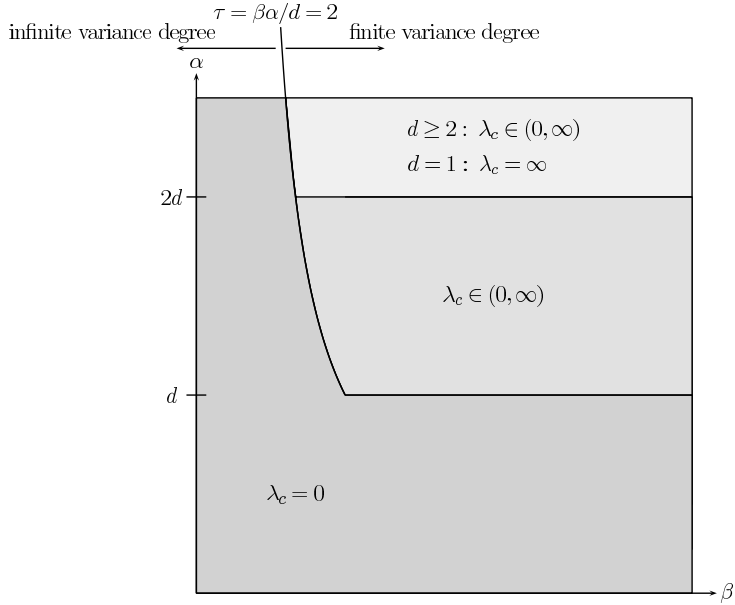}
\end{center}
\caption{phase transition picture for $d\ge1$.} \label{Picture: Phase}
\end{figure}

\section{Main results}\label{sec:results}
\subsection{Continuity of percolation probability}

We say that there exists an infinite cluster ${\mathcal C}$ if
there is an infinite connected component ${\mathcal C}(x)$ for some $x\in \Z^d$.
Since the model is translation invariant and ergodic, the event
of having an infinite cluster ${\mathcal C}$ is a
zero-one event. Thus, for $\lambda > \lambda_c$ there exists
an infinite cluster, a.s. Moreover, from Theorem 1.3 in
\cite{Berger} we know that an infinite cluster is unique, a.s.
This justifies the notation ${\mathcal C}$ for {\it the} infinite cluster
in the case of percolation $\theta(\lambda,\alpha)>0$ and implies that
we have a unique infinite connected network, a.s.
\begin{Theorem}\label{continuity}
Assume $\min \{ \alpha, \beta \alpha \} > d$ and 
$\alpha \in (d,2d)$. Choose $\lambda \in (0,\infty)$  with $\theta(\lambda,\alpha)>0$.
There exist $\lambda' \in (0,\lambda)$ and $\alpha'\in (\alpha, 2d)$ such that
\begin{equation*}
\theta\left(\lambda',\alpha'\right) > 0.
\end{equation*}
In particular, 
$\left\{ \lambda \in (0,\infty);~\theta(\lambda, \alpha)>0\right\}$ 
is an open interval in $(0,\infty)$, 
and there does not exist an infinite cluster ${\mathcal C}$
at criticality $\lambda_c$.
\end{Theorem}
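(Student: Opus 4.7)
The plan is to adapt Berger's renormalisation argument from \cite{Berger} to the inhomogeneous model. I will fix a truncation level $M$, call $x\in\Z^d$ \emph{light} if $W_x\le M$, and for $B_N=[-N,N]^d\cap\Z^d$ consider the increasing local event
\[
A_N\ =\ \bigl\{\text{some connected }K_N\subset B_N\text{ of at least }c_dN^d\text{ light vertices is spanned by edges with both endpoints in }B_N\bigr\},
\]
with $c_d\in(0,1)$ a small constant. Since $A_N$ involves only the finitely many weights $(W_x)_{x\in B_N}$ and the finitely many edges inside $B_N$, and since each $p_{xy}(\lambda,\alpha)$ is continuous in $(\lambda,\alpha)$, the probability $\p_{\lambda,\alpha}[A_N]$ is jointly continuous in $(\lambda,\alpha)$.

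\textbf{Finite-box estimate and perturbation.} Assuming $\theta(\lambda,\alpha)>0$, uniqueness of the infinite cluster $\mathcal C$ (Theorem~1.3 of \cite{Berger}) combined with the ergodic theorem gives $|\mathcal C\cap B_N|/|B_N|\to\theta(\lambda,\alpha)>0$ almost surely. Discarding heavy vertices loses at most a fraction $M^{-\beta}$ in density, and discarding edges leaving $B_N$ removes only $o(N^d)$ further vertices on average, since $\sum_{y:|x-y|>N}\p[x\occupied y]=o(1)$ under the standing hypothesis $\min\{\alpha,\alpha\beta\}>d$. Hence for any $\delta>0$ I can choose $M$ large and then $N$ large so that $\p_{\lambda,\alpha}[A_N]\ge 1-\delta$. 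Continuity then yields $\lambda'\in(0,\lambda)$ and $\alpha'\in(\alpha,2d)$ with $\p_{\lambda',\alpha'}[A_N]\ge 1-2\delta$. Tiling $\Z^d$ by translates $B_N^{(y)}=yN+B_N$, $y\in\Z^d$, and declaring $y$ \emph{good} when $A_N$ holds on $B_N^{(y)}$ gives a $1$-dependent site field on the renormalised lattice of density $\ge 1-2\delta$.

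\textbf{Stitching and conclusion.} For two good sites $y,y'$ the clusters $K^{(y)},K^{(y')}$ contain at least $c_dN^d$ vertices each at mutual Euclidean distance $\le D|y-y'|N$, so using $W_x\ge 1$ the probability that no underlying edge joins them is at most
\[
\exp\!\left(-\sum_{x\in K^{(y)},\,x'\in K^{(y')}}\frac{\lambda'W_xW_{x'}}{|x-x'|^{\alpha'}}\right)\ \le\ \exp\!\left(-c'\,\frac{N^{2d-\alpha'}}{|y-y'|^{\alpha'}}\right).
\]
Since $\alpha'<2d$, this vanishes as $N\to\infty$ when $|y-y'|=1$ and decays like $|y-y'|^{-\alpha'}$ for fixed large $N$. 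Combining a $1$-dependent Liggett--Schonmann--Stacey comparison for the good-site field with these stitching probabilities, the renormalised ``good-and-stitched'' process stochastically dominates a high-density long-range percolation model on $\Z^d$: for $d\ge 2$, nearest-neighbour Bernoulli site percolation at density close to $1$ already percolates, while for $d=1$ with $\alpha'\in(1,2)$ the renormalised model is itself long-range percolation with edge-decay exponent $\alpha'<2$, which percolates by \cite{newman:schulman}. The resulting infinite renormalised cluster, pieced together via the stitching edges and the $K^{(y)}$, yields an infinite connected component of the original graph at $(\lambda',\alpha')$, proving $\theta(\lambda',\alpha')>0$. Openness of $\{\lambda:\theta(\lambda,\alpha)>0\}$ and absence of percolation at $\lambda_c$ then follow immediately from the monotonicity $\theta(\lambda',\alpha)\ge\theta(\lambda',\alpha')$.

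\textbf{Main obstacle.} The principal difficulty, absent from the homogeneous case, is that the Pareto weights are unbounded, so atypically large weights far outside $B_N$ could in principle contaminate the cluster structure inside. The light-vertex truncation together with the restriction to interior edges in the definition of $A_N$ is what makes the event genuinely local and jointly continuous in $(\lambda,\alpha)$; the trade-off between truncation loss $M^{-\beta}$ and macroscopic cluster density $\theta$ is balanced by choosing $M$ large before $N$. The argument closes only for $\alpha\in(d,2d)$ because only then is the stitching exponent $\alpha'<2d$ available, forcing the inter-box connection probability to tend to $1$.
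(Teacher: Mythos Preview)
Your overall architecture—establish a local finite-box event whose probability is continuous in $(\lambda,\alpha)$, perturb, then renormalise and stitch via the lower bound $W_x\ge 1$—matches the paper's. The genuine gap is in the step passing from $\theta(\lambda,\alpha)>0$ to $\p_{\lambda,\alpha}[A_N]\ge 1-\delta$. The ergodic theorem tells you that $\mathcal C\cap B_N$ has density close to $\theta$, but not that these vertices remain connected \emph{using only edges inside $B_N$}. Your justification, ``discarding edges leaving $B_N$ removes only $o(N^d)$ further vertices since $\sum_{y:|x-y|>N}\p[x\occupied y]=o(1)$'', at best bounds the number of vertices incident to a boundary-crossing edge (and even that only for $x$ away from $\partial B_N$); it says nothing about what deleting those edges does to connectivity. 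Removing $o(N^d)$ edges from a connected set of $\theta N^d$ vertices can shatter it into pieces each of size $o(N^d)$, and in long-range models the infinite cluster may well be held together by a sparse skeleton of long edges that exit and re-enter the box.

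This is precisely the content of the paper's Lemma~\ref{connected component} (Berger's renormalisation, Lemma~2.3 of \cite{Berger}), which shows directly that with high probability $B_m$ contains a connected component of size at least $\rho m^{\alpha'/2}$—note, not $\rho m^d$—formed by interior edges only. That lemma is the non-trivial input you are missing; it is not a consequence of density plus a first-moment edge count. Once it is in hand, the weight truncation becomes unnecessary: the event $\{|C_m|\ge\rho m^{\alpha'/2}\}$ already depends only on the finitely many weights and edges inside $B_m$, hence its probability is continuous in $(\lambda,\alpha)$, and your stitching and site-bond comparison then go through exactly as in the paper. Incidentally, your ``main obstacle'' paragraph misidentifies the difficulty: weights of vertices outside $B_N$ never influence edges with both endpoints in $B_N$, so their unboundedness plays no role in the locality of the box event.
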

\noindent 
Note that for $\beta\alpha<2d$ we have $\lambda_c=0$, therefore 
Theorems \ref{theo 32} and \ref{continuity} imply the following 
corollary. 
\begin{Corollary}\label{at criticality}
Assume $\alpha \in (d,2d)$ and $\tau=\beta\alpha/d>2$. 
There is no infinite cluster ${\mathcal C}$
at criticality $\lambda_c>0$.
\end{Corollary}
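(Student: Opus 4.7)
The plan is to combine Theorem~\ref{theo 32}(b) and Theorem~\ref{continuity} directly, with essentially no extra work. The corollary is stated under the two hypotheses $\alpha \in (d,2d)$ and $\tau = \beta\alpha/d > 2$, so the first step is to observe that these two conditions together verify the hypotheses of Theorem~\ref{continuity}: indeed $\tau > 2$ means $\beta\alpha > 2d > d$, and combined with $\alpha > d$ this yields $\min\{\alpha, \beta\alpha\} > d$. Simultaneously, $\beta\alpha > 2d$ places us squarely in case (b) of Theorem~\ref{theo 32}, which gives $\lambda_c > 0$, so that criticality is a non-trivial, strictly positive value of the parameter.

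Next, I would apply Theorem~\ref{continuity} at the critical parameter $\lambda_c$. The simplest route is the ``in particular'' clause of that theorem, which asserts outright that no infinite cluster exists at $\lambda_c$ under the stated hypotheses; combined with $\lambda_c > 0$ established above, this is exactly the content of the corollary. For completeness one can also give the short contradiction argument: if $\theta(\lambda_c, \alpha) > 0$, then Theorem~\ref{continuity} produces $\lambda' \in (0, \lambda_c)$ and $\alpha' \in (\alpha, 2d)$ with $\theta(\lambda', \alpha') > 0$; since $\theta(\cdot, \alpha)$ is non-increasing in its second argument, $\theta(\lambda', \alpha) \ge \theta(\lambda', \alpha') > 0$, contradicting the definition of $\lambda_c$ as the infimum of $\lambda$ with $\theta(\lambda, \alpha) > 0$.

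There is no genuine obstacle here: all the work has been done in Theorems~\ref{theo 32} and \ref{continuity}. The only thing to verify carefully is the bookkeeping of hypotheses, namely that $\tau > 2$ simultaneously triggers $\lambda_c > 0$ via Theorem~\ref{theo 32}(b) and keeps us inside the regime $\min\{\alpha, \beta\alpha\} > d$ required by Theorem~\ref{continuity}. So the proof should be two or three lines long, whose main role is to highlight the interplay between the two theorems at the boundary value $\lambda_c$.
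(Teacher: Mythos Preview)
Your proposal is correct and matches the paper's own approach exactly: the paper states (just before the corollary) that Theorems~\ref{theo 32} and~\ref{continuity} together imply the result, and your argument carries out precisely this combination, with the appropriate check that $\tau>2$ both yields $\lambda_c>0$ via Theorem~\ref{theo 32}(b) and ensures $\min\{\alpha,\beta\alpha\}>d$ so that Theorem~\ref{continuity} applies.
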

\noindent 
Next we state continuity
of the percolation probability in $\lambda$ which was conjectured in \cite{Remco}.
\begin{Theorem}\label{theorem continuous}
For $\min\{\alpha, \beta\alpha\}>d$ and $\alpha\in (d,2d)$, the percolation 
probability $\lambda \mapsto \theta(\lambda,\alpha)$ is continuous.
\end{Theorem}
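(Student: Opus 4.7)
Following the strategy used in \cite{Berger} for the homogeneous long-range model, the plan is to combine Theorem~\ref{continuity} with the general continuity argument of Aizenman--Kesten--Newman \cite{akn}. I would first set up the standard monotone coupling: sample the Pareto weights $(W_x)_{x \in \Z^d}$ once, and independently i.i.d.\ uniform variables $(U_{xy})_{x \ne y}$; declare edge $(x,y)$ open at level $\lambda$ iff $U_{xy} < p_{xy}(\lambda)$, so that $(\omega_\lambda)_{\lambda > 0}$ is monotone in $\lambda$. From Theorem~\ref{continuity}, the set $\{\lambda > 0 : \theta(\lambda,\alpha) > 0\}$ is the open interval $(\lambda_c,\infty)$ and $\theta(\lambda_c,\alpha) = 0$, so $\theta \equiv 0$ on $(0, \lambda_c]$ and continuity on this interval is automatic.

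It remains to handle $(\lambda_c, \infty)$. Right-continuity at any $\lambda_0 \ge \lambda_c$ follows from the coupling using the approximation $\theta(\lambda) = \lim_{k\to\infty} \p[\mathcal{C}_\lambda(0) \not\subseteq B_k]$, with $B_k$ the box of side $2k$: for each fixed $k$ the finite-box probability is continuous in $\lambda$ (the edge probabilities $p_{xy}(\lambda)$ are smooth in $\lambda$, and the $\alpha > d$ tail condition makes the contribution of edges exiting $B_k$ summable uniformly on compact $\lambda$-neighborhoods). A dominated-convergence argument, followed by $k \to \infty$, yields $\theta(\lambda_n) \downarrow \theta(\lambda_0)$ along any sequence $\lambda_n \downarrow \lambda_0$.

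Left-continuity at $\lambda_0 > \lambda_c$ is the delicate step. Here I would invoke Proposition~1.3 of \cite{akn} in the spirit of \cite{Berger}: uniqueness of the infinite cluster (cited in the excerpt via Theorem~1.3 of \cite{Berger}) combined with the conditional finite-energy property of the model (given the weights, $p_{xy}(\lambda) \in (0,1)$ almost surely) should force $\theta$ to be continuous at every supercritical $\lambda_0$. Heuristically, any finite portion of the unique infinite cluster at $\lambda_0$ is already present in $\omega_\lambda$ for $\lambda$ close to $\lambda_0$ under the coupling, and uniqueness together with finite energy ensures that this portion almost surely extends to infinity in $\omega_\lambda$ as well, up to probability tending to $0$. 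The main obstacle is adapting the Aizenman--Kesten--Newman hypothesis to our random-weight setting, where the edge probabilities are not uniformly bounded away from $0$ and $1$; I would handle this by conditioning on $(W_x)_x$ and applying the AKN argument fibrewise on the almost sure event that all weights are finite, then integrating out the weights.
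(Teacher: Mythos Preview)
Your approach is essentially the paper's: continuity on $(0,\lambda_c]$ from Theorem~\ref{continuity}, right-continuity via the decreasing limit of box-escape probabilities $\p[\mathcal C(0)\cap\Lambda_n^{\rm c}\neq\emptyset]$ (the paper controls the long-edge tail using the finite-mean degree distribution from \cite{Remco}, Theorem~2.2, rather than just $\alpha>d$), and left-continuity via the monotone coupling together with uniqueness of the infinite cluster.

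One simplification relative to your plan: the paper's left-continuity argument works directly in the annealed measure and requires neither finite energy nor fibrewise conditioning on the weights. On the event $\{|\mathcal C_\lambda(0)|=\infty\}$, fix any $\lambda_0\in(\lambda_c,\lambda)$; by uniqueness the $\lambda_0$-infinite cluster is a subgraph of $\mathcal C_\lambda(0)$, so a \emph{finite} $\lambda$-open path $\pi$ connects $0$ to it, and $\lambda_1:=\max_{e\in\pi}\phi_e<\lambda$ strictly. Taking $\lambda'\in(\lambda_0\vee\lambda_1,\lambda)$ gives $|\mathcal C_{\lambda'}(0)|=\infty$. Thus $\{|\mathcal C_\lambda(0)|=\infty\}\subset\{|\mathcal C_{\lambda'}(0)|=\infty\text{ for some }\lambda'<\lambda\}$ almost surely, and the ``obstacle'' you flagged (non-uniformly bounded edge probabilities) never enters.
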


\subsection{Percolation on finite boxes} 

For integers $m\ge 1$ we define the box of size $m^d$ by
$B_m = [0,m-1]^d$, and by
$C_m$ we denoted the largest connected component in box $B_m$ (with a fixed
deterministic rule if there is more than one largest connected component in $B_m$).

\begin{Theorem}\label{percolation:boxes}
Assume $\min\{\alpha,\beta\alpha\}>d$ and $\alpha \in (d,2d)$. 
Choose $\lambda\in (0,\infty)$ with $\theta(\lambda,\alpha)>0$. 
For each $\alpha'\in(\alpha, 2d)$ there exist 
$\rho>0$ and $N_0<\infty$ such that for all $m\ge N_0$ we have
\begin{equation*}
\p\left[|C_m|\ge \rho |B_m|\right] \ge 1-e^{-\rho m^{2d-\alpha'}}.
\end{equation*}
\end{Theorem}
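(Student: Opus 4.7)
The plan is to adapt the renormalization strategy of \cite{Berger} (the proof of Theorem~3.2 there) to the inhomogeneous setting, using Theorem~\ref{continuity} to carve out the parameter room that supplies the exponent $m^{2d-\alpha'}$. Given $\alpha'\in(\alpha,2d)$, I would first apply Theorem~\ref{continuity} together with the monotonicity of $\theta$ in both arguments to produce $\lambda''\in(0,\lambda)$ and $\alpha''\in(\alpha,\alpha')$ still satisfying $\theta(\lambda'',\alpha'')>0$. Because $\alpha<\alpha''$ and $\lambda>\lambda''$, the inequality $\lambda|x-y|^{-\alpha}\ge\lambda''|x-y|^{-\alpha''}$ holds for every $x\neq y$ in $\Z^d$, so the $(\lambda,\alpha)$-model can be written as the superposition of the $(\lambda'',\alpha'')$-model and an independent family of ``extra'' edges whose conditional intensity, given the weights, is at least $c\,W_xW_y|x-y|^{-\alpha}$ once $|x-y|$ exceeds a fixed constant.

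I would then set up a two-scale renormalization. Partition $B_m$ into $(m/L)^d$ sub-boxes of side length $L$, with $L$ large but fixed. Call a sub-box \emph{pre-good} if the restriction of the $(\lambda'',\alpha'')$-model to a slightly enlarged concentric box contains a connected cluster of size at least $\rho_0 L^d$. Since $\theta(\lambda'',\alpha'')>0$, a standard finite-energy / Peierls argument as in \cite{Berger,aizenman:newman} allows $L$ to be chosen so that pre-goodness fails with probability at most $\delta$, any prescribed $\delta>0$. The extra-edge layer then glues the pre-good clusters: for any two pre-good sub-boxes at spatial separation of order $r$, the number of extra edges between their designated large clusters stochastically dominates a sum of independent Bernoullis of total mean at least $c\,L^{2d}r^{-\alpha}$, so a Chernoff bound yields at least one connecting extra edge except with exponentially small probability. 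A supercritical site-percolation argument on the renormalized lattice (declaring a sub-box \emph{good} if pre-good and if glued by extra edges to a positive fraction of other pre-good sub-boxes) produces a connected union of good sub-boxes containing at least $\rho|B_m|$ vertices, which is the desired conclusion.

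The principal difficulty is obtaining the sharp exponent $m^{2d-\alpha'}$ rather than merely exponential decay in the boundary size $m^{d-1}$ as in short-range percolation. The mechanism here is genuinely long-range: the expected number of macroscopic extra edges within $B_m$ is of order $m^{2d-\alpha}$, and a Chernoff bound applied to this sum of conditionally independent indicators is the source of the exponent. The slack $\alpha'-\alpha>0$ granted by Theorem~\ref{continuity} is consumed by polynomial and logarithmic error terms accumulated during the two-scale iteration (boundary effects of pre-goodness, discretization of sub-boxes, Pareto tail of the weights). The weights themselves cause no qualitative trouble because $W_x\ge 1$ a.s.\ keeps all edge intensities at least as large as in the homogeneous model, but they do require truncating the very largest weights (say at a polynomial level $m^\kappa$) so that the Chernoff estimate has bounded summands and can achieve the claimed exponent uniformly in the weight configuration.
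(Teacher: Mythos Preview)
Your two-scale renormalization is the right picture, and the paper proceeds the same way: partition $B_L$ into sub-boxes of side $m$, declare a sub-box alive if its largest component is large, observe that long-range edges attach alive sub-boxes with a homogeneous long-range kernel, and feed this into a site-bond percolation estimate (Lemma~3.6 of \cite{biskup}, quoted here as Lemma~\ref{lem:box}(b)) which manufactures the exponent $N^{2d-\alpha'}$.

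The genuine gap is your seed estimate. You define a sub-box to be \emph{pre-good} when its largest cluster has size $\ge\rho_0 L^d$, and claim this occurs with probability $\ge1-\delta$ by ``a standard finite-energy / Peierls argument as in \cite{Berger,aizenman:newman}''. No such elementary argument exists: producing a cluster of \emph{volume order} in a finite box is exactly the hard statement in long-range percolation with $\alpha\in(d,2d)$, and is what Theorem~\ref{percolation:boxes} asserts. What Berger's Lemma~2.3 (here Lemma~\ref{connected component}) actually gives is only $|C_m|\ge\rho\, m^{\alpha/2}$ with probability $\ge1-\varepsilon$. The crucial observation, which your sketch misses, is that this sub-volume size is already enough for the gluing: two alive boxes contribute $\rho^2 m^{\alpha}$ candidate endpoint pairs, each occupied with probability at least $\lambda(2\sqrt d+1)^{-\alpha}|x-y|^{-\alpha}$ (using only $W_z\ge1$), and the product yields attachment probability $\ge1-\exp\bigl(-\xi(|x-y|/m)^{-\alpha}\bigr)$ with $\xi$ as large as desired. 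This is Lemma~\ref{lem:box}(a). The upgrade from $m^{\alpha/2}$ to volume order happens \emph{through} the site-bond renormalization, not as input to it.

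Beyond this, the detour through Theorem~\ref{continuity}, the decomposition into a $(\lambda'',\alpha'')$-layer plus extra edges, and the weight truncation are all unnecessary. The paper never perturbs the parameters and never splits the edge family: since $W_x\ge1$ a.s., every conditional probability $p_{xy}$ already dominates $1-\exp(-\lambda|x-y|^{-\alpha})$, so the homogeneous bounds apply directly by stochastic domination. The slack $\alpha'>\alpha$ is absorbed inside the site-bond black box Lemma~\ref{lem:box}(b), not by moving the original model.
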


\noindent This statement says that in case of percolation largest connected components
in finite boxes cover a positive fraction of these box sizes 
with high probability for large $m$, or in other words, the number of particles
belonging to the largest connected network in $B_m$ is proportional to $m^d$.
This is the analog to the
statement in homogeneous long-range percolation, see Theorem 3.2 in \cite{biskup}.

For integers $n\ge 1$ and $x\in \Z^d$ define the box centered at $x$ with total side length
$2n$ by $\Lambda_n(x)=x+[-n,n]^d$ and let ${\mathcal C}_n(x)$ be the vertices in $\Lambda_n(x)$
that are connected with $x$ within box $\Lambda_n(x)$. 
For $\ell < n$ and $\rho>0$ we denote by
\begin{equation*}
{\cal D}_n^{(\rho,\ell)}
=\left\{ x \in \Lambda_n(0); ~
|{\mathcal C}_\ell(x)|\ge \rho |\Lambda_\ell(x)| \right\}
\end{equation*}
the set of vertices $x\in \Lambda_n(0)$ which are $(\rho,\ell)$-dense, i.e., surrounded
by sufficiently many connected vertices in $\Lambda_\ell(x)$, 
see also Definition 2 in \cite{biskup}. 
\begin{Corollary} \label{corollary big clusters}
Under the assumptions of Theorem \ref{percolation:boxes} we have the following. 
\begin{enumerate}
\item[$(i)$] 
There exists $\rho>0$ such that for any $x\in \Z^d$
\begin{equation*}
\lim_{n\to\infty}
\p\left[|{\mathcal C}_n(x)| \ge \rho |\Lambda_n(x)| \big | x\in {\cal C}\right]=1.
\end{equation*}
%
\item[$(ii)$] 
For any $\alpha'\in (\alpha, 2d)$
there exist $\rho>0$ and $\ell_0$
such that for any $\ell$ and $n$ with $\ell_0\le \ell \le n/\ell_0$
\begin{equation*}
\p\left[|{\cal D}_n^{(\rho,\ell)}| \ge \rho |\Lambda_n(0)|\right]
\ge 1- e^{-\rho n^{2d-\alpha'}}.
\end{equation*}
\end{enumerate}
\end{Corollary}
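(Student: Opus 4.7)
The plan is to deduce both parts from Theorem~\ref{percolation:boxes} by a renormalization argument in the spirit of \cite{Berger, biskup}, proving (ii) first and then bootstrapping to~(i). For (ii), I would tile $\Lambda_n(0)$ by pairwise disjoint sub-boxes $B^{(1)}, \dots, B^{(M)}$ of side length $\ell$, with $M$ of order $(n/\ell)^d$, and call $B^{(i)}$ \emph{good} when its largest connected component $C(B^{(i)})$ has $|C(B^{(i)})| \ge \tilde\rho\,\ell^d$, where $\tilde\rho$ and $N_0$ come from Theorem~\ref{percolation:boxes} and we take $\ell \ge \ell_0 := N_0$. Since the edges inside $B^{(i)}$ depend only on $(W_x)_{x \in B^{(i)}}$ and the weight families of disjoint sub-boxes are independent, the goodness events are mutually independent, each of probability at least $1 - e^{-\tilde\rho\, \ell^{2d-\alpha'}}$. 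The geometric observation that $B^{(i)} \subset \Lambda_\ell(x)$ for every $x \in B^{(i)}$ gives $\mathcal{C}_\ell(x) \supset C(B^{(i)})$ and hence
\[
|\mathcal{C}_\ell(x)| \;\ge\; \tilde\rho\, \ell^d \;\ge\; \frac{\tilde\rho}{3^d}\, |\Lambda_\ell(x)|,
\]
so $x \in \mathcal{D}_n^{(\rho',\ell)}$ with $\rho' = \tilde\rho/3^d$. Each good sub-box thus contributes at least $\tilde\rho\,\ell^d$ dense vertices, and summing over good sub-boxes reduces the claim to controlling the number of bad sub-boxes. I would handle this by a union bound when $\ell$ is close to $n/\ell_0$ (where each sub-box is good with probability extremely close to one) and by a Chernoff bound when $\ell$ is close to $\ell_0$ (where $M$ is of order $n^d$); in both regimes the resulting exponent in $n$ dominates $n^{2d-\alpha'}$, giving the required tail $1 - e^{-\rho' n^{2d-\alpha'}}$ after possibly shrinking $\rho'$.

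For (i), by translation invariance I take $x = 0$. Applying Theorem~\ref{percolation:boxes} directly to $\Lambda_n(0)$ shows that its largest component $C_n$ satisfies $|C_n| \ge \rho\,|\Lambda_n(0)|$ with probability tending to one, so it suffices to prove that $\p[\,0 \in C_n \mid 0 \in \mathcal{C}\,] \to 1$. Using part~(ii), the $(\rho',\ell)$-dense vertices cover a positive fraction of $\Lambda_n(0)$; the renormalization construction of \cite{Berger, biskup} then shows that nearby dense vertices are joined by long-range edges, which are abundant because $\alpha \in (d,2d)$, so the dense set merges into a macroscopic connected subgraph of $\Lambda_n(0)$ that must coincide with $C_n$. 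Uniqueness of the infinite cluster (Theorem~1.3 of \cite{Berger}) then places $C_n$ inside the unique infinite cluster $\mathcal{C}$, so conditioning on $0 \in \mathcal{C}$ and ruling out the unlikely configurations in which $\mathcal{C} \cap \Lambda_n(0)$ splits into two distinct in-box components forces $0 \in C_n$ with conditional probability tending to one, whence $\mathcal{C}_n(0) = C_n$ has the required size.

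The technical heart of the argument is this last step in~(i): showing that, conditional on $0 \in \mathcal{C}$, the connection from $0$ to $C_n$ can be realized inside $\Lambda_n(0)$ rather than through detours that escape the box. The regime $\alpha \in (d,2d)$ is essential here, because the heavy tail of long edges ensures that any two macroscopic subsets of $\mathcal{C} \cap \Lambda_n(0)$ are joined by a direct edge lying wholly inside $\Lambda_n(0)$ with probability tending to one, which is precisely what excludes the pathological splitting of $\mathcal{C} \cap \Lambda_n(0)$ into several in-box components and thus keeps $0$ linked to the in-box giant.
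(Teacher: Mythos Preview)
Your argument for part~(ii) is essentially correct and matches the approach the paper invokes (Biskup's Corollary~3.4, with Theorem~\ref{percolation:boxes} as input). One wording slip: the containment $\mathcal{C}_\ell(x)\supset C(B^{(i)})$ holds only for $x\in C(B^{(i)})$, not for every $x\in B^{(i)}$; but since you then count ``at least $\tilde\rho\,\ell^d$ dense vertices'' per good sub-box, this is exactly the set you need. Also, there is no need to split into two regimes: the Chernoff/union bound gives an exponent of order $(n/\ell)^d\,\ell^{2d-\alpha'}=n^d\ell^{d-\alpha'}\ge n^{2d-\alpha'}$ uniformly for $\ell_0\le\ell\le n$, since $\alpha'>d$.

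Part~(i), however, has a genuine circularity. You correctly reduce to showing $\p[0\in C_n\mid 0\in\mathcal{C}]\to 1$, and you correctly identify the crux as linking $0$ to $C_n$ \emph{inside} $\Lambda_n(0)$. But your proposed mechanism---``any two macroscopic subsets of $\mathcal{C}\cap\Lambda_n(0)$ are joined by a direct edge''---does not apply, because you have no a~priori lower bound on $|\mathcal{C}_n(0)|$ beyond the trivial $|\mathcal{C}_n(0)|\to\infty$ almost surely. A direct-edge estimate between a set of fixed size $K$ and $C_n$ gives only $1-\exp(-cK\rho n^d/n^{\alpha})$, which is useless since $\alpha>d$. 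Ruling out two \emph{macroscopic} in-box pieces of $\mathcal{C}$ does not prevent $0$ from sitting in a tiny in-box piece.

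The way the paper (via Biskup's Corollary~3.3) closes this gap is a two-scale argument using Lemma~\ref{lem:box}\,(a) rather than part~(ii). Fix $\epsilon>0$, choose $m,\delta,\xi$ from Lemma~\ref{lem:box}\,(a), and then use only the almost-sure fact $|\mathcal{C}_n(0)|\uparrow\infty$ on $\{0\in\mathcal{C}\}$ to find a \emph{fixed} $n_1$ with $\p[\,|\mathcal{C}_{n_1}(0)|\ge\delta m^d \mid 0\in\mathcal{C}\,]\ge 1-\epsilon$. For $n\gg n_1$, the set $\mathcal{C}_{n_1}(0)\subset\mathcal{C}_n(0)$ now has size $\ge\delta m^d$ and sits in a bounded neighbourhood of the origin; the same pairing estimate as in Lemma~\ref{lem:box}\,(a) (which uses only $W_z\ge 1$ and works conditionally on the configuration in $\Lambda_{n_1}(0)$) attaches it to the largest components of the surrounding alive $m$-boxes, and hence to the site-bond giant that unpacks to $C_n$. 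This is where Lemma~\ref{lem:box}\,(a) enters the proof of~(i)---the paper explicitly flags it as the replacement for Biskup's Lemma~3.5---and it is the step your outline is missing.
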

\noindent
This result can be interpreted as local clustering in the sense that with high probability
(for large $n$) particles are surrounded by many other particles belonging to the 
same connected network. 
Corollary \ref{corollary big clusters} is the analog
to Corollaries 3.3 and 3.4 in \cite{biskup}. Once the proofs of Theorem
\ref{percolation:boxes} and Lemma \ref{lem:box} (a), below, are established it follows 
from the derivations in \cite{biskup}.

\subsection{Graph distances}

For $x,y\in \Z^d$ we define $d(x,y)$ to be the minimal number of occupied edges which connect
$x$ and $y$, and we set $d(x,y)=\infty$ for $y\notin {\cal C}(x)$.
The value $d(x,y)$ is called graph distance or chemical distance between $x$ and $y$, and
it denotes the minimal number of occupied edges that need to be crossed from $x$ to $y$
(and vice versa). If $d(x,y)$ is typically
small for distant $x$ and $y$ then we say that the network
has the small-world effect.
\begin{Theorem}\label{chemical distance}
Assume $\min \{ \alpha, \beta \alpha \} > d$.
\begin{itemize}
\item[(a)] (infinite variance of degree distribution). Assume  $\tau=\beta\alpha/d <2$. For
any $\lambda>\lambda_c=0$ there exists $\eta_1> 0$ such that
for every $\epsilon>0$
\begin{equation*}
\lim_{|x|\to \infty}
\p \left[ \left. \eta_1\le \frac{d(0,x)}{\log \log |x|}\le 
(1+\epsilon) \frac{2}{|\log(\beta\alpha/d-1)|}
 \right|0,x\in {\cal C}\right]=1.
\end{equation*}
\item[(b1)] (finite variance of degree distribution case 1). Assume  $\tau=\beta\alpha/d >2$
and $\alpha \in (d,2d)$. For any $\lambda>\lambda_c$  
and any $\epsilon>0$
\begin{equation*}
\lim_{|x|\to \infty}
\p \left[ \left. 
1-\varepsilon
\le \frac{\log d(0,x)}{\log\log |x|}\le 
(1+\epsilon)\frac{\log 2}{\log(2d/\alpha)} 
\right|0,x\in {\cal C}\right]=1.
\end{equation*}
\item[(b2)] (finite variance of degree distribution case 2). Assume 
 $\min\{\alpha,\beta\alpha\} >2d$. There exists $\eta_2>0$ such that
\begin{equation*}
\lim_{|x|\to \infty}
\p \left[\eta_2<  \frac{ d(0,x)}{ |x|} \right]=1.
\end{equation*}
\end{itemize}
\end{Theorem}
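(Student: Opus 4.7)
The plan is to prove parts (a), (b1), and (b2) by three distinct mechanisms, all exploiting the density statements of Theorem \ref{percolation:boxes} and Corollary \ref{corollary big clusters}; the conditioning on $0, x \in \mathcal{C}$ is controlled by the uniqueness of $\mathcal{C}$. For part (a) I follow the hub-chain construction of \cite{Remco}. The upper bound is obtained by iteratively producing, from a starting vertex of weight $W$, an occupied neighbour of weight at least $u$ where $u$ solves the first-moment balance $\lambda W \cdot u^{1-\beta} \asymp 1$. This gives a recursion $u_{k+1} \asymp u_k^{1/(\beta\alpha/d-1)}$, and hence $\log u_k$ grows as the $k$-fold iterate of multiplication by $1/(\beta\alpha/d-1)$. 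Building chains of length $(1+\epsilon) \log\log|x|/|\log(\beta\alpha/d-1)|$ from both $0$ and $x$ and joining them by a single long direct edge (which opens with probability close to $1$ once both endpoint weights exceed $|x|^{d/\alpha+\epsilon}$) gives the upper bound with the stated factor $2$. For the matching lower bound $\eta_1 \log\log|x|$, I argue in reverse: the maximum weight attained within graph-distance $\le k$ of $0$ is dominated by the same doubly exponential sequence, and any path from $0$ to $x$ must contain at least one edge of length $\ge |x|/k$ whose endpoints carry weights of order $(|x|/k)^{d/\alpha}$; combining these forces $k \gtrsim \log\log|x|$.

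For part (b1) the upper bound adapts the hierarchical renormalization of \cite{biskup}. Fix $\alpha'\in(\alpha,2d)$ and work with geometric scales $L_k = L_0^{\gamma^k}$ where $\gamma < 2d/\alpha$ is chosen close to $2d/\alpha$. By Theorem \ref{percolation:boxes}, each box of side $L_k$ contains a dense connected subcluster with probability $\ge 1-\exp(-\rho L_k^{2d-\alpha'})$; two such subclusters in neighbouring boxes at mutual distance $L_{k+1}$ are then joined by a direct occupied edge with probability close to $1$, since the expected number of candidate edges is of order $L_k^{2d} L_{k+1}^{-\alpha} = L_k^{2d-\gamma\alpha} \to \infty$. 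Iterating for $K \asymp \log\log|x|/\log(2d/\alpha)$ levels produces a path of length $2^K = (\log|x|)^{\log 2/\log(2d/\alpha) + \epsilon}$. The lower bound $d(0,x) \ge (\log|x|)^{1-\epsilon}$ follows from a first-moment argument on open paths: after truncating the weights at a polylogarithmic level and separately bounding the rare event that any vertex on the path carries a larger weight, the expected number of open self-avoiding paths of length $k$ from $0$ to $x$ is bounded by a $k$-fold convolution of the kernel $|z|^{-\alpha}$, which for $\alpha<2d$ decays fast enough to force $k \ge (\log|x|)^{1-\epsilon}$.

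For part (b2), under $\min\{\alpha,\beta\alpha\}>2d$ long occupied edges are extremely sparse. Integrating out the Pareto weights gives $\E[p_{0y}] = O(|y|^{-\alpha})$ (the condition $\beta\alpha>2d$ controls the contribution from heavy weights), so the expected number of occupied edges of length $\ge r$ incident to any fixed vertex is $O(r^{d-\alpha})$. A Borel--Cantelli argument over dyadic scales, together with control of the extreme weights inside $\Lambda_{|x|}(0)$, yields that almost surely only finitely many occupied edges of length $\ge \epsilon|x|$ lie within $\Lambda_{2|x|}(0)$. Any path from $0$ to $x$ that avoids these few long edges must use $\Theta(|x|)$ short steps, giving $d(0,x) \ge \eta_2|x|$.

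The main obstacle will be the clean iteration of the renormalization for the upper bound in part (b1): independence of the ``dense-box'' events across scales is compromised by the shared weights $(W_x)_{x \in \Z^d}$, which introduce long-range correlations through occasional very large-weight hubs. I plan to circumvent this by conditioning on the set of vertices with weight exceeding a truncation level $T$, applying the renormalization to the truncated model (where the required conditional independence across scales holds), and then restoring the hubs by stochastic domination, using Theorem \ref{continuity} to guarantee that the truncated model still lies in the percolative regime.
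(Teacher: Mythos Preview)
Your treatment of parts (a) and (b1) is broadly in line with the paper. For (a) the paper simply cites \cite{Remco}, and for (b1) the paper likewise defers the lower bound to \cite{Remco} and proves the upper bound by transplanting Biskup's hierarchy (Lemmas~4.2--4.3 of \cite{biskup}) via Corollary~\ref{corollary big clusters}~(ii). One remark on your last paragraph: the correlation worry you raise for the (b1) upper bound is a non-issue here, and your proposed truncation-plus-Theorem~\ref{continuity} workaround is unnecessary. The Biskup scheme only needs \emph{lower} bounds on the edge and dense-box probabilities, and since $W_x\ge 1$ a.s.\ the inhomogeneous model stochastically dominates the homogeneous one; the paper simply invokes this domination (``use that the weights $W_x$ are at least one'') and carries over Biskup's estimates verbatim.

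The real gap is in part (b2). Your argument, as written, does not yield a linear lower bound. Controlling ``occupied edges of length $\ge \epsilon|x|$ in $\Lambda_{2|x|}(0)$'' and then asserting that a path avoiding them uses $\Theta(|x|)$ short steps is incorrect: if the remaining ``short'' edges can have length up to $\epsilon|x|$, a path from $0$ to $x$ could consist of $O(1/\epsilon)$ such edges, which is a constant, not linear in $|x|$. A single-scale Borel--Cantelli bound on long edges is not enough; edges at \emph{every} intermediate scale must be controlled simultaneously, and one must rule out that a path strings together a few edges of length $\sim|x|^{1/2}$, then a few of length $\sim|x|^{1/4}$, and so on, beating linearity.

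The paper handles this with the multi-scale renormalization of \cite{Berger2}: one defines recursively ``good'' $n$-stage boxes (Definition~\ref{definition good boxes}), where goodness at stage $n$ requires both the absence of edges longer than $m_{n-1}/100$ \emph{and} that all but $3^d$ children at stage $n-1$ are themselves good. Lemma~\ref{good children 0} bounds the long-edge probability, Lemma~\ref{good children} shows the not-good probabilities are summable (so by Borel--Cantelli all large boxes around the origin are eventually good, a.s.), and then a purely deterministic statement (Lemma~\ref{minimal length of paths 2}, which is Proposition~3 of \cite{Berger2}) says that inside a tower of good boxes every path between well-separated points has length at least $c_3$ times the Euclidean distance. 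It is precisely the recursive, all-scales nature of ``good'' that forces linearity; your single cutoff at $\epsilon|x|$ cannot substitute for it.
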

\noindent
From Theorem \ref{chemical distance} (a) we conclude that in the case $\tau \in (1,2)$ we have
a small-world effect and the graph distance is of order $\log \log |x|$ as $|x|\to \infty$.
In the case $\tau > 2$ and $\alpha\in (d,2d)$ (for $\lambda>\lambda_c$) the small-world effect
is less pronounced in that the graph distance is conjectured to be of order $(\log |x|)^\Delta$
for $|x|\to\infty$. Note that this is a conjecture because the bounds in 
Theorem \ref{chemical distance} (b1) are not sufficiently sharp to obtain
the exact constant $\Delta>0$. Finally, in the case 
$\min\{\alpha,\beta\alpha\} >2d$ we do not have the small-world effect
and graph distance behaves linearly in the Euclidean distance.
In Figure \ref{Picture: Distances} we illustrate Theorem \ref{chemical distance}
and we complete the conjectured picture 
about the graph distances. 

Case (a) of Theorem \ref{chemical distance} was proved in Theorems 5.1 and 5.3 of \cite{Remco}. 
Statement (b1) proves upper and lower bounds in case 1 of finite variance 
of the degree distribution. The
lower bound was proved in  Theorem 5.5 of \cite{Remco}.
The upper bound will be proved 
below in Proposition \ref{chemical distance proposition 1}. 
Finally, the lower bound in (b2) improves the one given in  Theorem 5.6 of \cite{Remco}.


\begin{figure}[htb!]
\begin{center}
\includegraphics[height=8cm]{./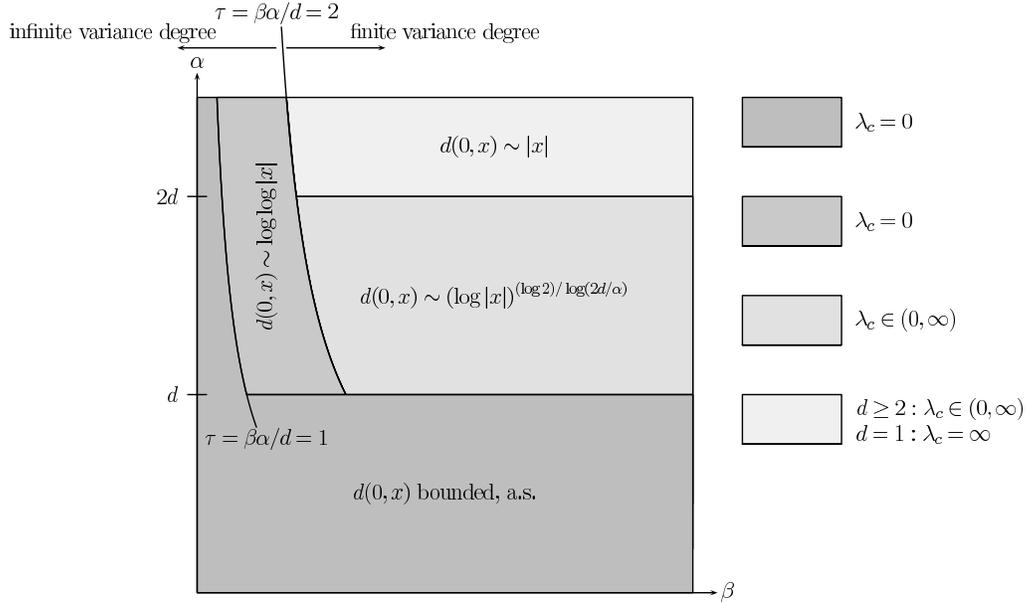}
  \end{center}
\caption{picture about the graph distances (partly as  conjecture).} \label{Picture: Distances}
\end{figure}

\section{Discussion and outlook}\label{sec:discussion}

In percolation theory one important problem is to understand the behavior of the model at criticality
$\lambda_c$. 
In nearest-neighbor Bernoulli bond percolation on $\Z^d$, where nearest-neighbor
edges are vacant or occupied with probability $p\in(0,1)$, 
it is known that for $d=2$ and for $d\ge 19$  there is no percolation at criticality and hence the 
percolation function is continuous at the critical value (see \cite{hara:slade} and \cite{grimmett} for more details). 
In cases $3\le d\le 18$ this question is still open. In the homogeneous long-range percolation model 
it was shown by \cite{Berger} that there is no percolation at criticality for $\alpha\in(d,2d)$. It is 
believed that the long-range percolation model behaves similarly to the nearest-neighbor Bernoulli percolation model 
when $\alpha>2d$ and, thus, showing continuity for such values 
and $d>1$ remains a difficult problem. 
In our model also the case $\min\{\alpha,\beta\alpha\}>2d$ for $d>1$ 
is open which is conjectured to behave as nearest-neighbor Bernoulli percolation, and
hence is not of interest for real-life network modeling.

Another problem which remains to be answered in both homogeneous and inhomogeneous long-range models is the 
continuity of the critical parameter $\lambda_c(\alpha)$ as a function of $\alpha$ and also as a 
function of parameter $\beta$, the exponent of the power law in weights (in case of the inhomogeneous model). Moreover, for real-life network applications it will be important to (at least) get reasonable bounds
on the critical value $\lambda_c(\alpha)$ and the percolation probability $\theta(\lambda,\alpha)$.
This will allow for model calibration of real-life networks so that (asymptotic) network properties
can be studied.

There was quite 
some work done to understand the geometry of the 
homogeneous long-range percolation model. 
In particular, there are five different behaviors depending on 
$\alpha<d, \alpha=d, \alpha\in (d,2d), \alpha=2d$ and $\alpha>2d$, for a review of existing results see 
discussion in~\cite{chadey}. 
In some of the cases, like $\alpha=2d$ (for $d\ge 1$) and $\alpha>2d$, the results are not yet fully known.
 The case $d=1$ and $\alpha=2$ was resolved recently in \cite{ding}. It is clear that in the case of inhomogeneous 
long-range percolation the complexity even increases due to having more parameters and, hence, degrees of freedom.
For instance, the understanding of the graph distance behavior is still poor for $\min\{\alpha,\beta\alpha\}>2d$, though
we believe that it should behave similarly to nearest-neighbor Bernoulli bond percolation. 
Moreover, the optimal constants in the asymptotic behaviors of 
Theorem \ref{chemical distance} are still open. However, we would like to emphasize that the 
inhomogeneous
long-range percolation model fulfills the stylized fact that the degree distribution is heavy-tailed 
which is not the case for the homogeneous long-range percolation model.
Therefore, the inhomogeneous model is an appealing framework for real-life network modeling,
in particular for $\tau \in (1,2)$ where we obtain
an infinite connected network for any $\lambda>0$.

\section{Proofs}\label{sec:proofs}
\subsection{Bounds on percolation on finite boxes}\label{renormalization section}

The basis for all the proofs of the previous statements is
Lemma \ref{connected component} below 
which determines large connected components on finite boxes. 
\begin{Lemma}\label{connected component}
Assume $\min \{ \alpha, \beta \alpha \} > d$ and $\alpha \in (d,2d)$.
Choose $\lambda \in (0,\infty)$ with $\theta(\lambda,\alpha)>0$ and 
let $\alpha'\in[\alpha, 2d)$.
For every $\varepsilon \in (0,1)$ and $\rho>0$ there exists $N_0\ge 1$
such that for all $m\ge N_0$
\begin{equation*}
\p\left[|C_m| \ge \rho m^{\alpha'/2}\right]\ge 1-\varepsilon,
\end{equation*}
where  $C_m$ is the largest connected component in box $B_m=[0,m-1]^d$.
\end{Lemma}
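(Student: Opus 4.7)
The plan is to carry out a block-renormalization argument along the lines of Berger's treatment of homogeneous long-range percolation \cite{Berger}, adapted to cope with the random weights $(W_x)$.

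First I would exploit $\theta(\lambda,\alpha)>0$ via a finite-volume approximation: by monotone convergence one can choose a scale $L$ and a constant $c>0$ such that the event that the box $[0,L-1]^d$ contains a cluster of at least $cL^d$ vertices, built only from edges with both endpoints in the doubled box $[-L,2L-1]^d$, has probability at least $\pi_0>0$. I would partition $B_m$ into $(m/L)^d$ disjoint $L$-blocks and call a block \emph{good} when it carries such a local cluster. Goodness is an increasing event depending only on weights and edges in a doubled block, so the FKG inequality combined with a coloring of blocks by parity (so that doubled blocks of one color do not overlap) stochastically minorizes the good-block field by an independent Bernoulli field of density at least $2^{-d}\pi_0$ on the rescaled lattice.

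Next I would couple good blocks by their direct long edges. Since $W_x\ge 1$ almost surely, each of the $L^{2d}$ vertex pairs between two blocks at rescaled separation $k$ carries an independent occupied edge with probability at least $1-\exp(-\lambda (kL)^{-\alpha})$, so the probability of at least one direct link between them is at least $1-\exp(-c_0 L^{2d-\alpha}k^{-\alpha})$. The rescaled model therefore dominates a homogeneous long-range percolation on the coarse-grained lattice with exponent $\alpha$ and effective intensity of order $L^{2d-\alpha}\to\infty$ as $L\to\infty$, which is deeply supercritical for $L$ large. A combinatorial chaining argument in this supercritical rescaled model, analogous to the estimates in \cite{Berger,biskup}, produces a connected component of good blocks of size at least $\rho_0(m/L)^{\alpha'/2}$ in $B_m$ with probability at least $1-\varepsilon$. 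Since each good block contributes $cL^d$ vertices to a common cluster of the original model, one obtains $|C_m|\ge cL^d\cdot \rho_0 (m/L)^{\alpha'/2}\ge \rho m^{\alpha'/2}$ once $L$ is fixed large and $N_0=N_0(L,\varepsilon)$ is taken sufficiently large.

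The main technical obstacle is controlling the dependencies among the good-block indicators, which are positively correlated both through shared long edges and through the shared weights of boundary vertices of overlapping doubles. The coupling to an independent Bernoulli field, the route I would take, requires a careful FKG-plus-decoupling argument that accounts for the weights being attached to vertices rather than to edges; a secondary option is to truncate at $W_x\le K$, invoking $\mathbb{P}[W_x>K]=K^{-\beta}$ to make the excess weights negligible. The slack parameter $\alpha'>\alpha$ enters precisely at the chaining step: it absorbs the loss between the connection probability $1-\exp(-c_0 L^{2d-\alpha}k^{-\alpha})$ (which stays strictly below $1$ for each fixed $k$) and the stronger supercritical estimates needed at the rescaled level to produce the polynomial cluster lower bound $m^{\alpha'/2}$.
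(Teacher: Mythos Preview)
Your plan is correct and follows the same route as the paper's own sketch, which likewise defers to Berger's renormalization (Lemma~2.3 of \cite{Berger}): the seed comes from $\theta(\lambda,\alpha)>0$ in the inhomogeneous model, and the iterative linking of large sub-clusters uses only the weight-free lower bound $p_{xy}\ge 1-\exp(-\lambda|x-y|^{-\alpha})$ granted by $W_x\ge 1$ a.s.

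One remark on the ``main technical obstacle'' you flag: it is lighter than you fear. Because the weights are i.i.d.\ and, conditionally on them, the edge variables are independent, the full configuration restricted to \emph{disjoint} vertex sets is genuinely independent in this model; so if you take seed blocks whose enlargements are pairwise disjoint (a sparse sublattice, at the cost of a harmless constant in the density of good blocks), the good-block indicators are already independent---no FKG, no truncation of weights needed. The linking step then uses only the weight-free lower bound, exactly as you say, so the coarse-grained site-bond model has independent sites and independent bonds, and Berger's recursive estimate on the bad-box probability $\psi_n$ carries over verbatim. This is precisely why the paper can simply assert that ``the bound on $\psi_n$ \ldots\ also applies to the inhomogeneous percolation model'' without further discussion.
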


\begin{sketch}[of proof of Lemma \ref{connected component}]
This lemma corresponds to Lemma 2.3 of \cite{Berger} in our model. 
Its proof is based on renormalization arguments which only depend 
on the fact that $\alpha\in(d,2d)$ and that the edge 
probabilities are bounded from below by 
$1-\exp(-\lambda|x-y|^{-\alpha})$ for any $x,y\in\Z^d$. 
Using that $W_x\ge1$ for all $x\in\Z^d$, a.s., we see by  
stochastic dominance that the renormalization holds also 
true for our model. 
Renormalization shows that for $m$ sufficiently large, the probability of 
$\{$$B_m$ contains at least a positive fraction of $m^d$  
vertices that are connected within a fixed enlargement of $B_m$$\}$   
is bounded by a multiple of the probability of the same 
event but on a much smaller scale. 
To bound the latter probability we then use the fact that 
the model is percolating, and from this we can conclude Lemma \ref{connected component}. 
We  skip the details of the proof of Lemma  \ref{connected component} and refer to 
the proof of Lemma 2.3 of \cite{Berger11} for the details, 
in particular, the bound on $\psi_n$ in our homogeneous percolation model 
(see proof of Lemma 2.3 in \cite{Berger11}) also applies to the 
inhomogeneous percolation model.
\end{sketch}

Although the above lemma does not allow the connected component $C_m$ 
to have size proportional to the size of  box $B_m$, it is useful because
it allows to start a new renormalization scheme to improve
these  bounds. This results in our Theorem \ref{percolation:boxes}
and is done similar as in Section 3 of \cite{biskup}. 
For the proof of Theorem \ref{percolation:boxes}
we use the following lemma which has two parts. 
The first one gives the initial step of the renormalization 
and the second one gives a standard site-bond percolation model result. 
Once the lemma is established the proof of Theorem \ref{percolation:boxes}
becomes a routine task.

Let $C_m(x)$ denote the largest connected component in box $B_m(x)$ 
(with a fixed
deterministic rule if there is more than one largest connected component in $B_m(x)$).
For $x, y\in m\Z^d$, we say that boxes $B_m(x)$ and $B_m(y)$
are \textit{pairwise attached},  
write $B_m(x)\occupied B_m(y)$,
if there is an occupied edge between a vertex in 
$C_m(x)$ and a vertex in $C_m(y)$.

\begin{Lemma}\label{lem:box}~
 \begin{itemize}
  \item[(a)] Assume $\min\{\alpha,\beta\alpha\}>d$ and $\alpha\in (d,2d)$. 
Choose $\lambda\in (0,\infty)$ such that $\theta(\lambda,\alpha)>0$.
For each $\xi<\infty$ and $r \in (0,1)$ 
there exist  $m<\infty$ and an integer $\delta>0$ such that
\begin{eqnarray*}
&&\p\left[ |C_m(x)|<\delta |B_m(x)|\right] \le 1-r,
\\&&
\p\left.\bigg[ 
B_m(x) \occupied B_m(y)
\bigg||C_m(x)|\ge\delta |B_m(x)|,~|C_m(y)|\ge\delta |B_m(y)|
\right]\ge 1- e^{-\xi\left(\frac{|x-y|}{m}\right)^{-\alpha}},
\end{eqnarray*}
for all $x \neq y\in m\Z^d$.
\item[(b)] [Lemma 3.6, \cite{biskup}] Let $d\ge 1$ and 
consider the site-bond percolation model on $\Z^d$ with sites being 
alive with probability $r\in[0,1]$
and sites $x,y\in\Z^d$ are attached with probability 
$\widetilde{p}_{x,y}=1-\exp(-\xi |x-y|^{-\alpha})$ where
$\alpha\in (d,2d)$ and $\xi\ge 0$. Let 
$|\widetilde{\mathcal C}_N|$ be the size of the 
largest attached cluster $\widetilde{\mathcal C}_N$ of living sites 
in box $B_N$. For each $\alpha'\in(\alpha,2d)$ there 
exist $N_0\ge1$, $\nu>0$ and $\xi_0<\infty$ such that 
\begin{equation*}
\p_{\xi, r}\left[|\widetilde{\mathcal C}_N|\ge \nu |B_N|\right]
\ge 1- e^{-\nu \xi N^{2d-\alpha'}}
\end{equation*}
holds for all $N\ge N_0$ whenever $\xi\ge \xi_0$ and $r\ge 1- e^{-\nu \xi}$.
 \end{itemize}
\end{Lemma}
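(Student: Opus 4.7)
The plan is to prove only part (a) of the lemma, since part (b) is explicitly attributed to Lemma 3.6 of \cite{biskup}. Part (a) has two assertions; I would prove the pairwise attachment bound first, since the same calculation then feeds into the renormalization behind the ``alive'' probability.

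For the pairwise attachment, condition on the sigma-field $\mathcal{F}$ generated by the weights $(W_z)_{z\in\Z^d}$, the sub-components $C_m(x),C_m(y)$, and all edges other than those running between $C_m(x)$ and $C_m(y)$. The crossing edges are then conditionally independent, and using $W_uW_v\ge 1$ a.s.\ gives
\[
\p\bigl[B_m(x)\not\occupied B_m(y)\,\bigm|\,\mathcal{F}\bigr]\;\le\;\exp\Bigl(-\lambda\sum_{u\in C_m(x),\,v\in C_m(y)}|u-v|^{-\alpha}\Bigr).
\]
For $x\ne y\in m\Z^d$ every pairwise distance satisfies $|u-v|\le c(|x-y|+m)\le c'|x-y|$ for absolute constants $c,c'$, so the double sum is bounded below by $c''\delta^2 m^{2d-\alpha}(|x-y|/m)^{-\alpha}$. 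Because $2d-\alpha>0$, choosing $m$ large enough that $c''\lambda\delta^2 m^{2d-\alpha}\ge\xi$ yields the desired bound $1-e^{-\xi(|x-y|/m)^{-\alpha}}$, and this choice depends only on the preassigned constants $\xi$ and $\delta$.

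For the alive probability, the plan is a renormalization bootstrap starting from Lemma \ref{connected component}. Fix $\alpha''\in(\alpha,2d)$ and apply Lemma \ref{connected component} at a scale $m_0$ so that in each sub-box $B_{m_0}(z)\subset B_m$ the largest component has size at least $\rho_0 m_0^{\alpha''/2}$ with probability $\ge 1-\varepsilon$. On the renormalized lattice $m_0\Z^d$ declare a sub-box \emph{alive} when this holds, and \emph{attached} when an occupied edge runs between the large components of two alive sub-boxes. The alive probability is then $\ge 1-\varepsilon$, and by the attachment computation above the conditional attachment probability exceeds $1-\exp(-\xi(|z-w|/m_0)^{-\alpha})$ for any preassigned $\xi$ once $m_0$ is sufficiently large. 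Invoking Lemma \ref{lem:box}(b) on this renormalized site-bond model produces a giant attached cluster of alive sub-boxes, which lifts inside $B_m$ to a single connected component of size of order $\rho_0 m_0^{\alpha''/2}$ times that cluster. Iterating this coarse-graining strictly increases the achievable exponent in the alive criterion towards $d$, so that after finitely many rounds the criterion can be taken as $|C_m|\ge\delta m^d$ with probability $\ge r$ for any preassigned $r\in(0,1)$.

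The chief obstacle is the bookkeeping of scales in the iterated renormalization: at each round one must verify that the new alive probability exceeds the threshold $1-e^{-\nu\xi}$ required by part (b), while simultaneously ensuring that the stitched-together components cover a positive fraction of the current box rather than a vanishing one. This is precisely the programme carried out in Section 3 of \cite{biskup} in the homogeneous case; the inhomogeneity introduces no fundamental difficulty, since the inequality $W_uW_v\ge 1$ dominates every renormalized edge probability from below by its homogeneous counterpart, allowing Biskup's scheme to transfer essentially verbatim.
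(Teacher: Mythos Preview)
Your attachment calculation is correct and essentially identical to the paper's. The divergence is in the alive probability: you treat $\delta$ as a fixed positive density independent of $m$, and then propose to upgrade Lemma~\ref{connected component} (which only yields $|C_m|\ge\rho m^{\alpha'/2}$) to linear size $|C_m|\ge\delta m^d$ via a coarse-graining that invokes part~(b). This works---it is in fact precisely the argument for Theorem~\ref{percolation:boxes}, and a single renormalization step already produces positive density, so the ``iteration towards $d$'' you describe is not actually needed---but it inverts the paper's logical order, in which Lemma~\ref{lem:box}(a) is the \emph{input} to Theorem~\ref{percolation:boxes} rather than a consequence of it.

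The paper sidesteps all of this by allowing $\delta$ to depend on $m$. Given $\xi$ and $r$, it first fixes $\rho$ via $\lambda(2\sqrt d+1)^{-\alpha}\rho^2=\xi$, then applies Lemma~\ref{connected component} once (with $\alpha'=\alpha$) to obtain $m$ with $\p[|C_m|<\rho m^{\alpha/2}]\le 1-r$, and finally sets $\delta=\rho m^{\alpha/2-d}$. The alive bound is then immediate since $\delta|B_m|=\rho m^{\alpha/2}$; and in the attachment bound the number of cross-pairs is at least $(\rho m^{\alpha/2})^2=\rho^2 m^{\alpha}$, which together with $|u-v|\le(2\sqrt d+1)|x-y|$ gives exactly the exponent $\xi(|x-y|/m)^{-\alpha}$. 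No renormalization, no appeal to part~(b), and both inequalities are satisfied by the same pair $(m,\delta)$ in one stroke. Your route buys nothing extra here, while the paper's $m$-dependent $\delta$ also dissolves the scale-matching issue you flag at the end (the ``preassigned'' $\delta$ in your attachment bound versus the $\delta$ produced only later by the bootstrap).
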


\begin{proof}[of Lemma \ref{lem:box} (a)]
We adapt the proof of Lemma 3.5 of \cite{biskup} to our model.
Fix $r\in(0,1)$ and $\xi<\infty$.  
Choose $\rho>0$ such that 
\begin{equation*}
{\lambda }{\left(2\sqrt{d}+1\right)^{-\alpha}}\rho^2 = \xi,
\end{equation*}
note that this differs from choice (3.13) in \cite{biskup}.
Lemma \ref{connected component} then provides that
there exists $N_0\ge 1$ such that
for all $m\ge N_0$
\begin{equation*}
\p\left[ |C_m|< \rho m^{\alpha/2}\right]\le 1-r.
\end{equation*}
For the choice 
$\delta=\rho m^{\alpha/2-d}$ the first part of the result follows.
For the second part we choose $x\neq y \in m\Z^d$. 
For $x'\in B_m(x)$ and $y'\in B_m(y)$ we have upper bound, 
using that $W_z\ge 1$ for all $z\in \Z^d$, a.s., 
\begin{equation}\label{bound to be used again}
1-p_{x'y'}
~\le~ \exp \left( -\lambda|x'-y'|^{-\alpha} \right)
~\le~ \exp \left( {-\lambda}\left(2\sqrt{d}+1\right)^{-\alpha} {|x-y|^{-\alpha}} \right),
\end{equation}
a.s., where the latter no longer depends on the weights
$(W_z)_{z\in \Z^d}$.
For our choices of $\delta$ and $\rho$, \eqref{bound to be used again} implies
\begin{eqnarray*}
&&\hspace{-1cm}
\p\left.\bigg[ 
B_m(x) \not\occupied B_m(y)
\bigg||C_m(x)|\ge\delta |B_m(x)|,\,|C_m(y)|\ge\delta |B_m(y)|
\right]
\\&&=
\E\left[\left. \prod_{x'\in C_m{(x)},y'\in C_m{(y)}} (1-p_{x'y'})\right|
|C_m(x)|\ge\delta |B_m(x)|,~|C_m(y)|\ge\delta |B_m(y)|\right]
\\&&
\le \exp\left(-\lambda \left(2\sqrt{d}+1\right)^{-\alpha}
|x-y|^{-\alpha}
\rho^2m^{\alpha}\right)
= \exp\left(-\xi
\left(\frac{|x-y|}{m}\right)^{-\alpha}\right).
\end{eqnarray*}
This shows the second inequality of part (a). For part (b) we refer to Lemma 3.6 in \cite{biskup}.
\end{proof}

\begin{proof}[of Theorem \ref{percolation:boxes}]
The proof follows as in Theorem 3.2 of \cite{biskup}, we briefly sketch the main argument.
Choose the constants $N_0\ge 1$, 
$\nu>0$, $\xi>\xi_0$, $r\ge 1-e^{-\nu\xi}$
and $\delta>0$ as in Lemma~\ref{lem:box},
and note that it is sufficient to prove the theorem for $L=mN$, where $N\ge N_0$ and $m$ is 
chosen (fixed) as in
Lemma~\ref{lem:box} (a).
In this set up $B_L$ can be viewed as a disjoint 
union of $B_m(x)$ for $x\in (m\Z^d\cap B_L)$. There are $N^d$ such disjoint boxes. We call
$B_m(x)$ alive if $|C_m{(x)}|\ge\delta |B_m|$ and we say that disjoint $B_m(x)$ and $B_m(y)$ 
are pairwise attached if their largest connected components 
$C_m{(x)}$ and $C_m{(y)}$ share an occupied edge.  
Part (a) of Lemma \ref{lem:box} provides that
$B_m(x)$ is alive with probability exceeding $r$ 
and $B_m(x)$ and $B_m(y)$ are pairwise attached with probability exceeding
$\widetilde{p}_{x,y}$ for living boxes $B_m(x)$ and $B_m(y)$ with
$x,y \in m\Z^d$ (note that in this site-bond percolation model the attachedness property is 
only considered between living vertices because these form the clusters).  
For any $N\ge N_0$,
let $A_{N,m}$ be the event that  box $B_L$ contains a connected component formed by
attaching at least $\nu|B_N|$ of the living boxes. 
On  event $A_{N,m}$ we have for the largest connected component
in $B_L$
\begin{equation*}
|C_L|\ge (\nu|B_N|)(\delta |B_m|)= \nu\delta |B_L|,
\end{equation*}
thus, the volume of the largest connected component $C_L$ in box $B_L$ is
proportional to the volume of that box and there remains to show that
this occurs with sufficiently large probability. 
Part (b) of Lemma \ref{lem:box} and stochastic dominance provides
(note that we scale $x,y\in m\Z^d$ from Lemma \ref{lem:box} (a) to the
site-bond percolation model on $\Z^d$ in Lemma \ref{lem:box} (b))
\begin{eqnarray*}
\p \left[|C_L|\ge \nu\delta |B_L| \right]&\ge&
\p\left[A_{N,m}\right]
~\ge~ \p_{\xi,r}\left[|\widetilde{\mathcal C}_N|\ge \nu |B_N|
\right]
\\&\ge& 1-e^{-\nu\xi N^{2d-\alpha'}}
~=~1-e^{-\nu\xi m^{\alpha'-2d} L^{2d-\alpha'}}.
\end{eqnarray*}
Choosing $\rho\le \min\{\nu\delta, \nu \xi m^{\alpha'-2d}\}$ provides
$$\p\left[ |C_L|\ge \rho |B_L|\right] \ge1- e^{-\rho L^{2d-\alpha'}}.$$
This finishes the proof of Theorem \ref{percolation:boxes}.
\end{proof}

\begin{proof}[of Corollary \ref{corollary big clusters}]
The proofs of $(i)$ and $(ii)$ of Corollary \ref{corollary big clusters} follow completely analogous 
to the proofs of Corollaries 3.3 and 3.4 in \cite{biskup}
(note that Lemma \ref{lem:box} (a) replaces Lemma 3.5 of \cite{biskup}
and Theorem \ref{percolation:boxes} replaces Theorem 3.2 of \cite{biskup}).
\end{proof}

\subsection{Proof of continuity of the percolation probability}

The key to the proofs of the continuity statements is
again Lemma \ref{connected component}.

\begin{proof}[of Theorem \ref{continuity}]
Note that $\min \{ \alpha, \beta \alpha \} > d$ and $\alpha \in (d,2d)$ imply 
that $\lambda_c<\infty$.
Therefore, there exists $\lambda \in (\lambda_c,\infty)$
with $\theta=\theta(\lambda,\alpha)>0$.
For these choices of $\lambda>0$
we have a unique infinite cluster ${\mathcal C}$,
a.s., and we can apply Lemma \ref{connected component}.

We consider the same site-bond percolation model on $\Z^d$ as in 
Lemma \ref{lem:box} (b). Choose $\alpha'\in (\alpha, 2d)$, 
$0<\chi<1-\varepsilon<1$
and $\kappa >0$ and define the model as follows:
the following events are independent and
every site $x\in \Z^d$ is alive with probability
$r=1-\varepsilon-\chi \in (0,1)$ and sites $x,y\in \Z^d$ are attached
with probability $\widetilde{p}_{xy}=1-\exp(-\kappa(1-\chi)|x-y|^{-\alpha'})$.
For given $\alpha'\in (\alpha, 2d)$
we choose the parameters $\varepsilon,\chi, \kappa$ such that
there exists an infinite attached cluster of
living vertices, a.s., which is possible (see proof of Theorem 2.5
in \cite{Berger}).

The proof is now similar to the one of  Theorem \ref{percolation:boxes}.
Choose 
$\rho>0$ such that $\lambda \left(2\sqrt{d}+1\right)^{-\alpha'}\!\!\rho^2= \kappa$.
From Lemma \ref{connected component} we know that for all
$m$ sufficiently large and any $x\in m\Z^d$ 
\begin{equation*}
\p\left[ \left|C_m(x)\right|\ge \rho m^{\alpha'/2}\right]\ge 1-\varepsilon
>1-\varepsilon-\chi=r,
\end{equation*}
where $C_m(x)$ denotes the largest connected component in $B_m(x)$. 
The latter events define alive vertices $x$ on the lattice $m\Z^d$
(which due to scaling is equivalent to the above aliveness 
in the site-bond percolation model on $\Z^d$).
Note that this aliveness property is independent between different
vertices $x\in m\Z^d$. Attachedness $B_m(x) \occupied B_m(y)$, for $x\neq y\in m\Z^d$,
is then used as in the proof of Theorem \ref{percolation:boxes} and 
we obtain in complete analogy to the proof of the latter theorem
\begin{eqnarray}\nonumber
&&\hspace{-1cm}
\p \left[B_m(x)\Leftrightarrow B_m(y) \Big| |C_m(x)|\ge \rho m^{\alpha'/2},\, |C_m(y)|\ge \rho m^{\alpha'/2}\right]
\\&&
\ge 
1 - \exp \left( {-\lambda}\left(2\sqrt{d}+1\right)^{-\alpha}
|x-y|^{-\alpha} \rho^2m^{\alpha'}
\right)\label{lower bound for alive and attached}
~\ge~
1 - \exp \left( -\kappa {\left(\frac{|x-y|}{m}\right)^{-\alpha'}} \right)
,\nonumber
\end{eqnarray}
where in the last step we used the choice of $\rho$ and the fact that 
$\alpha<\alpha'$. 
Since $\kappa > \kappa (1-\chi)$
we get percolation and there exists an infinite
cluster ${\mathcal C}$, a.s., which implies $\theta(\lambda,\alpha)>0$.
Of course, this is no surprise because of the choice $\lambda>\lambda_c$ with
 $\theta(\lambda,\alpha)>0$.

Note that the probability
of a vertex $x\in m\Z^d$ being alive depends only on finitely many
edges of maximal distance $\sqrt{d}m$
(they all lie in the box $B_m(x)$)
 and therefore this probability is a continuous function
of $\lambda$ and $\alpha$. This implies that 
we can choose $\delta \in (0, \chi\lambda)$ and $\gamma \in (0, \alpha'-\alpha)$ so small
that
\begin{equation*}
\p_{\lambda-\delta, \alpha+\gamma}
\left[ \left|C_m(x)\right|\ge \rho m^{\alpha'/2}\right]\ge 1-\varepsilon-\chi=r,
\end{equation*}
where $\p_{\lambda-\delta, \alpha+\gamma}$ is the measure where for occupied
edges we replace parameters $\lambda$ by $\lambda-\delta \in (0,\lambda)$ and 
$\alpha$ by $\alpha+\gamma \in (\alpha, \alpha')$.
As in \eqref{lower bound for alive and attached} we obtain, 
note $\alpha+\gamma <\alpha'$, 
\begin{eqnarray*}
&&\hspace{-1cm}
\p_{\lambda-\delta,\alpha+\gamma} \left[
B_m(x)\Leftrightarrow B_m(y) \Big| |C_m(x)|\ge \rho m^{\alpha'/2},\, |C_m(y)|\ge \rho m^{\alpha'/2}
\right]
\\&&
\ge 
1 - \exp \left( -(\lambda-\delta)\left(2\sqrt{d}+1\right)^{-(\alpha+\gamma)}
|x-y|^{-(\alpha+\gamma)} \rho^2m^{\alpha'}
\right)\nonumber
\\&&\ge
1 - \exp \left( -\kappa 
 \left(1-\delta/\lambda\right)
{\left(\frac{|x-y|}{m}\right)^{-\alpha'}} \right)
.\nonumber
\end{eqnarray*}
Since $\delta/\lambda<\chi$ we get percolation
and there exists an infinite
cluster ${\mathcal C}$, a.s., 
which implies that $\theta(\lambda-\delta,\alpha+\gamma)>0$.
This finishes the proof of Theorem \ref{continuity}.
\end{proof}

\begin{proof}[of Theorem \ref{theorem continuous}]
We need to modify  Proposition 1.3 of \cite{akn} because in our
model edges are not occupied independently induced by the random choices of
weights $(W_x)_{x\in \Z^d}$.

\noindent {\it (i)} From Theorem \ref{continuity} it follows 
that $\theta(\lambda,\alpha)=0$ for all $\lambda\in (0,\lambda_c]$,
which proves continuity of $\lambda \mapsto 
\theta(\lambda,\alpha)$ on $(0,\lambda_c]$.

\noindent {\it (ii)} Next we  show that
$\lambda \mapsto \theta(\lambda,\alpha)$  is left-continuous
on $\lambda> \lambda_c$, that is,
\begin{equation} \label{eq:leftcont}
 \lim_{\lambda'\uparrow \lambda} \theta(\lambda',\alpha)=\theta(\lambda,\alpha).
\end{equation}
To prove this we couple all percolation realization as $\lambda$ varies.
This is achieved by randomizing the percolation constant
$\lambda$, see \cite{akn} and \cite{BK}.
Conditionally given the i.i.d.~weights $(W_x)_{x\in \mathbb Z^d}$,
define a collection of independent exponentially distributed random variables $\phi_{(x,y)}$, 
indexed by the edges $(x,y)$, which have conditional distribution
\begin{equation}
\label{exponential distribution}
\mathbf{P}\left[\left. \phi_{(x,y)}\le \ell\right| (W_x)_{x\in \mathbb Z^d}\right] 
=1-\exp\left( - \frac{\ell W_x W_y}{|x-y|^\alpha}\right), 
\qquad \ell \in(0, \infty),
\end{equation}
compare to \eqref{edge probability 2}. We denote the probability
measure of $(\phi_{(x,y)})_{x,y\in \Z^d}$ by $\mathbf{P}$ in order to 
distinguish this coupling model.
We say that an edge $(x,y)$ is $\ell$-open if $\phi_{(x,y)}< \ell$,
and we define the connected cluster ${C}_\ell(0)$ of the origin to be the
set of all vertices $x\in \Z^d$ which are connected to the origin by an
$\ell$-open path. Note that we have a natural ordering in $\ell$, 
i.e.~for $\ell_1<\ell_2$ we obtain ${C}_{\ell_1}(0)
\subset {C}_{\ell_2}(0)$. Moreover for $\ell=\lambda>0$, 
the $\lambda$-open edges
are exactly the occupied edges in this coupling
(note that the exponential distribution
\eqref{exponential distribution} is absolutely continuous). This implies
 for $\ell=\lambda$
\begin{equation*}
\theta(\lambda,\alpha)
=\p \left[|{\mathcal C}(0)|=\infty\right]
=\mathbf{P} \left[ |C_{\lambda}(0)|=\infty\right].
\end{equation*}
By countable subadditivity of $\mathbf{P}$ 
and the increasing property of $C_\ell(0)$ in $\ell$
we have
\begin{equation*}
\lim_{\lambda'\uparrow \lambda} \theta(\lambda',\alpha)
= \mathbf{P}\left[ | C_{\lambda'}(0)|=\infty\text{ for some } 
\lambda'<\lambda\right].
\end{equation*}
Moreover, the increasing property of $C_\ell(0)$ in $\ell$ provides
 $\{|C_{\lambda'}(0)|=\infty \text{ for some }\lambda'<\lambda\}
\subset \{|C_{\lambda}(0)|=\infty\}$.
Therefore, to prove \eqref{eq:leftcont} it suffices to show that
\begin{equation*}
\mathbf{P}\left[\{|C_{\lambda'}(0)|<\infty \text{ for all } 
\lambda'<\lambda\} \cap\{ |C_{\lambda}(0)|=\infty\} \right]=0.
\end{equation*}
Choose $\lambda_0 \in (\lambda_c, \lambda)$. 
Since there is a unique infinite cluster for $\lambda_0>\lambda_c$, a.s.,
there exists an infinite cluster $C_{\lambda_0}\subset C_{\lambda}(0)$
on the set $\{ |C_{\lambda}(0)|=\infty\}$.
If the origin belongs to $C_{\lambda_0}$ then the proof is done. 
Otherwise, because $C_{\lambda_0}$ is a subgraph of $C_{\lambda}(0)$,
there exists a finite path $\pi$ of $\lambda$-open edges connecting 
the origin with an edge in $C_{\lambda_0}$. 
By the definition of $\lambda$-open edges we have
$\phi_{(x,y)}<\lambda$ for all edges $(x,y)\in \pi$. 
Since $\pi$ is finite we obtain the
strict inequality $\lambda_1 =\max_{(x,y)\in \pi} \phi_{(x,y)}<\lambda$.
Choose $\lambda'\in (\lambda_0\vee \lambda_1, \lambda)$ and it follows that
$|C_{\lambda'}(0)|=\infty$. This completes the proof for 
the left-continuity in $\lambda$.

\noindent {\it (iii)}
Finally, we need to prove right-continuity of $\lambda \mapsto \theta(\lambda, \alpha)$
on $\lambda \ge \lambda_c$.
For integers $n>1$ we consider the boxes $\Lambda_n=[-n,n]^d$ centered
at the origin, see also Corollary \ref{corollary big clusters}.
We define the events $A_n=\{{\cal C}(0) \cap \Lambda_n^{\rm c}\not=\emptyset \}$, i.e., the
connected component ${\cal C}(0)$ of the origin leaves the box $\Lambda_n=[-n,n]^d$.
Note that $\theta(\lambda, \alpha)$ is the decreasing limit of $\p[A_n]$ as
$n\to \infty$. Therefore, it suffices to show that $\p[A_n]$
is a continuous function in $\lambda$. We write $\p_\lambda=\p$ to 
indicate on which parameter $\lambda$ the probability law depends.
We again denote by ${\mathcal C}_n(0)$ the connected component of the origin
connected within box $\Lambda_n$, see Corollary \ref{corollary big clusters}.
Then, we have
\begin{equation*}
A_n=\{{\mathcal C}(0) \cap \Lambda_n^{\rm c}\not=\emptyset \}
=\{{\mathcal C}_n(0)\occupied \Lambda_n^{\rm c} \}.
\end{equation*}
Choose $\delta_0 \in (0,\lambda)$, then we have for all $\lambda'\in (\lambda-\delta_0,\lambda+\delta_0)$
and all $n'>n$
\begin{eqnarray}\nonumber
\left|\p_ \lambda\left[A_n\right]-\p_{\lambda'}\left[A_n\right]\right|
&=&\left|\p_ \lambda\left[
{\mathcal C}_n(0) \occupied \Lambda_n^{\rm c}
\right]-\p_{\lambda'}\left[{\mathcal C}_n(0)\occupied \Lambda_n^{\rm c}\right]\right|
\\&\le& \nonumber
\left|\p_ \lambda\left[{\mathcal C}_n(0)\occupied (\Lambda_n^{\rm c} \cap \Lambda_{n'})\right]-\p_{\lambda'}
\left[{\mathcal C}_n(0)\occupied (\Lambda_n^{\rm c}\cap \Lambda_{n'})\right]\right|
\\&&\hspace{.5cm}\nonumber
+2~\p_{\lambda+\delta_0}
\left[{\mathcal C}_n(0)\occupied \Lambda_{n'}^{\rm c}\right]
\\&\le& \nonumber
\left|\p_ \lambda\left[{\mathcal C}_n(0)\occupied (\Lambda_n^{\rm c} \cap \Lambda_{n'})\right]-\p_{\lambda'}
\left[{\mathcal C}_n(0)\occupied (\Lambda_n^{\rm c}\cap \Lambda_{n'})\right]\right|
\\&&\hspace{.5cm} \label{continuity of A}
+~2(2n+1)^d \sup_{x\in \Lambda_n}\p_{\lambda+\delta_0}\left[
x\occupied \Lambda_{n'}^{\rm c}\right].
\end{eqnarray}
We bound the two terms on the right-hand side of
\eqref{continuity of A}.

\noindent{\it (a)}
First we prove that for all $\varepsilon>0$ there exists $n'>n$ such that for
all $x \in \Lambda_n$
\begin{equation}\label{1st claim}
\p_{\lambda+\delta_0}
\left[x\occupied \Lambda_{n'}^{\rm c}\right]<\varepsilon (2n+1)^{-d}/ 4.
\end{equation}
This is done as follows.
For $m>n$ we define the following events
\begin{equation*}
L_m =\{ x\occupied \partial \Lambda_{m+1}\}
=\{ x\occupied (\Lambda_{m+1}\setminus \Lambda_m)\}.
\end{equation*}
This implies for $n'>n$ that
\begin{equation*}
E_{n'}~\stackrel{\rm def.}{=}~
\{x\occupied \Lambda_{n'}^{\rm c}\}=\bigcup_{m\ge n'}L_m.
\end{equation*}
Moreover, note that $E_{n'}$ is decreasing in $n'$, 
\begin{equation*}
\limsup_{n'\to \infty}\p_{\lambda+\delta_0}\left[E_{n'}\right]
=\lim_{n'\to \infty}\p_{\lambda+\delta_0}\left[E_{n'}\right]
=\p_{\lambda+\delta_0}\left[\bigcap_{n'>n}E_{n'}\right]
=\p_{\lambda+\delta_0}\left[\bigcap_{n'>n} \left(\bigcup_{m\ge n'}L_m\right)
\right].
\end{equation*}
We prove \eqref{1st claim}
by contradiction. Assume that \eqref{1st claim} does not
hold true, i.e.~$\limsup_{n'\to \infty}\p\left[E_{n'}\right]>0$.
Then the first lemma of Borel-Cantelli implies
\begin{equation*}
\infty=
\sum_{m>n} \p_{\lambda+\delta_0} \left[L_m\right]
=\sum_{m>n} \p_{\lambda+\delta_0} \left[x\occupied (\Lambda_{m+1}\setminus \Lambda_m)\right]
=\E_{\lambda+\delta_0} \left[
\sum_{m>n} 1_{\{x\occupied (\Lambda_{m+1}\setminus \Lambda_m)\}}\right].
\end{equation*}
The latter implies that the degree distribution $D_x=|\{y\in \Z^d; x\occupied
y \}|$ has an infinite mean. This is a contradiction to Theorem 2.2
of \cite{Remco} saying that for $\min\{\alpha,\beta\alpha\}>d$ 
the survival function of the degree distribution has 
a power-law decay with rate $\alpha\beta/d>1$ which provides a
finite mean. Therefore, claim \eqref{1st claim} holds true.

\noindent
{\it (b)} For all $\varepsilon>0$ and all $n'>n$ 
there exists $\delta_1 \in (0,\delta_0)$ such that for all
$\lambda'\in (\lambda-\delta_1,\lambda+\delta_1)$
\begin{equation}\label{2nd claim}
\left|\p_ \lambda\left[{\mathcal C}_n(0)\occupied 
(\Lambda_n^{\rm c} \cap \Lambda_{n'})\right]-\p_{\lambda'}
\left[{\mathcal C}_n(0)\occupied (\Lambda_n^{\rm c}\cap \Lambda_{n'})\right]\right|
<\varepsilon / 2.
\end{equation}
Note that $\Lambda_{n'}$ only contains finitely many edges
of finite distance. Therefore, continuity in $\lambda$ is straightforward
which provides claim \eqref{2nd claim}.

Combining \eqref{1st claim} and \eqref{2nd claim}
provides continuity of $\p_\lambda[A_n]$ in $\lambda$ for all
$n$, see also \eqref{continuity of A}. Therefore,
right-continuity of $\lambda \mapsto \theta(\lambda, \alpha)$
follows. This finishes the proof of 
Theorem \ref{theorem continuous}.
\end{proof}

\subsection{Proofs of the graph distances}

In this section we prove Theorem \ref{chemical distance}.
Statement (a) of  Theorem \ref{chemical distance} is proved in 
Theorems 5.1 and 5.3 of \cite{Remco},
the lower bound of statement (b1) is proved in  Theorem 5.5 of \cite{Remco}.
Therefore, there remain the proofs of
the upper bound in (b1) and of the lower bound in (b2)
of Theorem \ref{chemical distance}.

The proof of the upper bound in 
Theorem~\ref{chemical distance} (b1) follows from the following proposition and the fact 
that $\alpha \mapsto\Delta(\alpha,2d)=\log2/ \log(2d/\alpha)$ is a continuous function.
The following proposition corresponds to Proposition 4.1 in \cite{biskup} in the
homogeneous long-range percolation model.

\begin{Proposition}\label{chemical distance proposition 1}
Let $\alpha\in(d,2d)$ and $\tau=\beta\alpha/d>2$ and $\lambda>\lambda_c$. 
For each $\Delta'>\Delta=\Delta(\alpha,2d)=\log2/ \log(2d/\alpha)$ and each 
$\varepsilon>0$, there exists $N_0<\infty$ such that
$$\p\left[d(x,y)\geq (\log|x-y|)^{\Delta'},x,y\in \mathcal C\right]\leq \varepsilon$$
holds for all $x,y\in\Z^d$ with $|x-y|\ge N_0$.
\end{Proposition}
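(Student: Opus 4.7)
The plan is to adapt the hierarchical construction used in Proposition 4.1 of \cite{biskup} to the inhomogeneous model. Since $W_x\ge 1$ almost surely, the edge probabilities \eqref{edge probability 2} are bounded below by the homogeneous expression $1-\exp(-\lambda|x-y|^{-\alpha})$, so all stochastic-dominance estimates driving Biskup's argument transfer as lower bounds. The density-of-dense-vertices estimate that is central to that proof is, in our setting, exactly what Corollary \ref{corollary big clusters}(ii) provides via Theorem \ref{percolation:boxes}.

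First I would fix $\alpha'\in(\alpha,2d)$ small enough that $\log 2/\log(2d/\alpha')<\Delta'$; this is possible by continuity of $\alpha\mapsto \log 2/\log(2d/\alpha)$ together with the assumption $\Delta'>\Delta(\alpha,2d)$. Let $\rho>0$ and $\ell_0<\infty$ be the constants furnished by Corollary \ref{corollary big clusters}(ii) for this choice of $\alpha'$. Writing $N=|x-y|$, define the decreasing sequence of scales $s_0=N$, $s_{k+1}=\lceil s_k^{\alpha'/(2d)}\rceil$, stopped at the first index $K$ with $s_K\le \ell_0$; an elementary computation gives $K=\log\log N/\log(2d/\alpha')+O(1)$.

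Next I would construct a connecting path by binary recursion across these scales. Choose a small $c>0$ so that $\Lambda_{cN}(x)$ and $\Lambda_{cN}(y)$ are disjoint. By Corollary \ref{corollary big clusters}(ii) applied independently in each of them at scale $s_1$, with probability at least $1-2\exp(-c_0 N^{2d-\alpha'})$ there exist $(\rho,s_1)$-dense vertices $u_1\in\Lambda_{cN}(x)$ and $v_1\in\Lambda_{cN}(y)$. The $W_z\ge 1$ lower bound on edge probabilities then yields
\[
\p\bigl[\text{no occupied edge between }{\mathcal C}_{s_1}(u_1)\text{ and }{\mathcal C}_{s_1}(v_1)\bigr]\le \exp\bigl(-\lambda\rho^2 s_1^{2d}(3N)^{-\alpha}\bigr)\le \exp(-c_1 N^{\alpha'-\alpha}),
\]
since $s_1^{2d}\ge N^{\alpha'}$ by construction. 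On the event that such an edge $(u_1^\ast,v_1^\ast)$ exists, the task reduces to connecting $x$ to $u_1^\ast$ inside $\Lambda_{cN}(x)$ and $y$ to $v_1^\ast$ inside $\Lambda_{cN}(y)$, each a problem at the smaller scale $s_1$. Iterating this recursion $K$ times, and absorbing the bounded overhead $C_0$ contributed by the base case at scale $\ell_0$, produces a path of length at most $C_0\cdot 2^{K+1}=(\log N)^{\log 2/\log(2d/\alpha')+o(1)}\le (\log N)^{\Delta'}$ for all sufficiently large $N$.

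The main obstacle will be the careful bookkeeping of conditional independence across the $K$ scales: at each level the dense sub-boxes identified and the long edges inspected must use disjoint edge-information so that a union bound can be applied. Following the scheme of \cite{biskup}, this is arranged by restricting the recursive search at level $k+1$ to two disjoint sub-boxes of radius $\sim s_k$ around $u_k^\ast$ and $v_k^\ast$ revealed at level $k$. The accumulated failure probability is then controlled by
\[
\sum_{k=0}^{K}\bigl(\exp(-c_0 s_k^{2d-\alpha'})+\exp(-c_1 s_k^{\alpha'-\alpha})\bigr),
\]
and a minor subtlety is the base step at scale $\ell_0$, where one invokes a positive-probability local connectivity estimate inside $\mathcal C$ and iterates a bounded number of attempts to boost the success probability arbitrarily close to $1$. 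The resulting bound holds on the event $\{x,y\in\mathcal C\}$, which is the desired estimate.
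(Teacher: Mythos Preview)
Your proposal is correct and follows essentially the same route as the paper: both adapt Biskup's hierarchical construction from \cite{biskup}, using the pointwise bound $W_x\ge 1$ to push the homogeneous edge-probability estimates through, and both identify Corollary~\ref{corollary big clusters}(ii) as the key density input replacing Corollary~3.4 of \cite{biskup}. The paper's own argument is in fact only a sketch that points to Lemmas~4.2 and~4.3 of \cite{biskup} and notes the sole required modification (replacing Biskup's $\beta$ by our $\lambda$ and invoking $W_x\ge 1$); your write-up spells out the scale recursion $s_{k+1}\approx s_k^{\alpha'/(2d)}$ and the error bookkeeping more explicitly. One small omission: your accumulated failure bound should carry a multiplicity factor $2^k$ from the union bound over the $2^k$ gap-pairs at level $k$, but since $2^K$ is polylogarithmic in $N$ this is harmless against the stretched-exponential terms.
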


\begin{sketch}[of proof of Proposition \ref{chemical distance proposition 1}]
We only sketch the proof because it is almost identical to the one in \cite{biskup}.
Definition 1 and Figure 1 of \cite{biskup} defines for $x,y\in \Z^d$
a hierarchy of depth $m\in \N$ 
connecting $x$ and $y$
as the following collection of vertices:
\begin{equation*}
{\cal H}_m(x,y)=\left\{ 
z_\sigma \in \Z^d;~ \sigma\in \{0,1\}^k \text{ for } k=1,\ldots, m \right\},
\end{equation*}
is a hierarchy of depth $m \in \N$ connecting $x$ and $y$ if
\begin{itemize}
\item[(1)] $z_0=x$ and $z_1=y$,
\item[(2)] $z_{\sigma00}=z_{\sigma0}$ and $z_{\sigma11}=z_{\sigma1}$ for all $k=0,\ldots, m-2$
and $\sigma \in \{0,1\}^k$,
\item[(3)] for all $k=0,\ldots, m-2$ and and $\sigma \in \{0,1\}^k$ such that 
$z_{\sigma01}\neq z_{\sigma10}$ the edge between 
$z_{\sigma01}$ and $z_{\sigma10}$ is occupied,
\item[(4)] each bond $(z_{\sigma01},z_{\sigma10})$ specified in (3) appears only once
in ${\cal H}_k(x,y)$.
\end{itemize}
The pairs of vertices $(z_{\sigma00},z_{\sigma01})$ and $(z_{\sigma10},z_{\sigma11})$
are called gaps. The proof is then based on the fact that for large distances
$|x-y|$ the event ${\cal B}_m$ of the existence of a hierarchy ${\cal H}_m(x,y)$ of depth $m$ 
that connects $x$ and $y$ 
through points $z_\sigma$ which are dense is very likely ($m$ appropriately chosen),
see Lemma 4.3 in \cite{biskup}, in particular formula (4.18) in \cite{biskup}
(where the key is Corollary \ref{corollary big clusters} $(ii)$).
On this likely event ${\cal B}_m$, Lemma 4.2 of \cite{biskup} then proves that
the graph distance cannot be too large, see (4.8) in \cite{biskup}.
We can now almost literally translate Lemmas 4.2 and 4.3 of \cite{biskup} to
our situation. The only changes are that in formulas (4.16) and (4.21) of \cite{biskup}
we need to replace $\beta>0$ of \cite{biskup}'s notation by $\lambda$ in our notation
and we need to use that the weights $W_x$ are at least one, a.s. We refrain from giving more
details.
\end{sketch}

There remains the proof of the lower bound in (b2)
of Theorem \ref{chemical distance}. We use a renormalization technique
which is based on a scheme introduced by \cite{Berger2}. 
Choose an integer valued sequence $a_n>1$, $n\in\N_0$, 
and define the box lengths $(m_n)_{n\in\N_0}$ as follows: 
set $m_0=a_0$ and for $n\in\N$, 
\begin{equation*}
m_n=a_nm_{n-1}=m_0\prod_{i=1}^na_i=\prod_{i=0}^na_i.
\end{equation*}
Define the $n$-stage boxes, $n\in\N_0$, by 
\begin{equation*}
B_{m_n}(x)=x + [0,m_n-1]^d, \qquad \text{ for $x\in \Z^d$.}
\end{equation*}
For $n\ge 1$, the children of  $n$-stage box $B_{m_n}(x)$ are the
$a_n^d$ disjoint $(n-1)$-stage boxes
\begin{equation*}
B_{m_{n-1}}(x+ym_{n-1})=x+ym_{n-1}+[0,m_{n-1}-1]^d\subset \Z^d
\qquad \text{ with $y\in ([0,a_n-1]^d\cap \Z^d)$.}
\end{equation*}
We are going to define
{\it good} $n$-stage boxes $B_{m_n}(\cdot)$, 
note that we need a different definition from Definition 2 of \cite{Berger2}.

\begin{Definition}[good $n$-stage boxes]\label{definition good boxes}
Choose $n\in \N_0$ and $x \in \Z^d$ fixed.
\begin{itemize}
\item $0$-stage box $B_{m_0}(x)$ is {\it good} under a given
edge configuration if there is no 
occupied edge in $B_{m_0}(x)$ with size larger than
$m_0/100$.
\item $n$-stage box $B_{m_n}(x)$, $n\ge1$, is {\it good} under a given
edge configuration if for all $j\in \{-1,0,1\}^d$
\begin{itemize}
\item[(a)] there is no occupied edge in $B_{m_n}\left(x+j\frac{m_{n-1}}{2}\right)$ with size larger than
$m_{n-1}/100$; and
\item[(b)] among the children of $B_{m_n}\left(x+j\frac{m_{n-1}}{2}\right)$ there
are at most $3^d$ that are not good.
\end{itemize}
\end{itemize}
\end{Definition}

\begin{Lemma}\label{good children 0}
Assume $\min\{\alpha, \beta\alpha\} > d$. 
For all $\delta \in (0,\alpha(\beta \wedge 1)-d)$ 
there exist $t_0 \ge 1$ and a constant $c_1>0$ such that for all $t \ge t_0$ and all $s\ge 1$,
\begin{equation*}
\p\left[\text{there is an occupied edge in $[0,s-1]^d$ with size larger than $t$} \right] 
\le 
c_1 s^d t^{d-\alpha( \beta \wedge 1)+\delta}.
\end{equation*}
\end{Lemma}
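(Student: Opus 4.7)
The plan is to use a union bound over pairs of vertices in $[0,s-1]^d$ and control each pairwise connection probability by the tail behaviour of the product $W_xW_y$.

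First I would write, by union bound,
\begin{equation*}
\p\bigl[\text{some occupied edge in } [0,s-1]^d \text{ of size} > t\bigr]
\le \sum_{\substack{x,y \in [0,s-1]^d \\ |x-y| > t}} \p[x \occupied y].
\end{equation*}
For a single pair, conditioning on the weights and using the elementary bound $1-e^{-z} \le \min(1,z)$ gives
\begin{equation*}
\p[x \occupied y] \le \E\!\left[\min\!\left(1, \frac{\lambda W_x W_y}{|x-y|^\alpha}\right)\right].
\end{equation*}

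Second, I would establish the tail estimate for the product of two independent Pareto($\beta$) weights. A direct computation yields
\begin{equation*}
\p[W_xW_y > z] = (1 + \beta \log z)\, z^{-\beta}, \qquad z \ge 1.
\end{equation*}
Splitting the expectation at the threshold $r^\alpha/\lambda$ (with $r=|x-y|$),
\begin{equation*}
\E\!\left[\min\!\left(1, \frac{\lambda W_xW_y}{r^\alpha}\right)\right]
\le \p\!\left[W_xW_y > r^\alpha/\lambda\right] + \frac{\lambda}{r^\alpha}\, \E\!\left[W_xW_y\, \mathbf{1}_{\{W_xW_y \le r^\alpha/\lambda\}}\right].
\end{equation*}
The first term is of order $r^{-\alpha\beta}\log r$. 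For the second term I would use $\E[Z\mathbf{1}_{\{Z\le M\}}] \le \int_0^M \p[Z>u]\,du$ and the tail estimate above: the integral is bounded by a constant when $\beta>1$, by $(\log M)^2/2$ when $\beta=1$, and by a constant times $M^{1-\beta}\log M$ when $\beta<1$. Substituting $M=r^\alpha/\lambda$, all three cases yield, after absorbing the logarithmic factor into an arbitrarily small $\delta/2>0$ and using $r\ge 1$,
\begin{equation*}
\p[x \occupied y] \le c\, r^{-\alpha(\beta \wedge 1) + \delta/2}
\end{equation*}
for some constant $c=c(\lambda,\beta,\delta)$ and all $r$ large enough.

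Third, I would carry out the lattice sum. Since $\delta/2 < \alpha(\beta\wedge 1) - d$, the sum $\sum_{y \in \Z^d: |y|>t}|y|^{-\alpha(\beta\wedge 1)+\delta/2}$ converges and is bounded, by comparison with a radial integral, by a constant times $t^{d-\alpha(\beta\wedge 1)+\delta/2}$. Summing over the $s^d$ choices of $x$ and using $t\ge t_0\ge 1$ to upgrade $\delta/2$ to $\delta$ (the exponent is negative) produces the desired bound $c_1 s^d t^{d-\alpha(\beta\wedge 1)+\delta}$.

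The main obstacle is the $\beta\le 1$ regime, where $\E[W_x]=\infty$ so a naïve expectation bound fails, and where the tail of the product $W_xW_y$ carries an unavoidable logarithmic correction. The slack $\delta>0$ in the statement is exactly what absorbs this log; choosing the intermediate exponent $\delta/2$ in the pairwise bound and reserving the other half for the summation step is the cleanest way to organise the estimates.
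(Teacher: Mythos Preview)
Your proof is correct and follows essentially the same approach as the paper: union bound over pairs, the elementary inequality $1-e^{-z}\le z\wedge 1$, the explicit tail $\p[W_xW_y>z]=(1+\beta\log z)z^{-\beta}$, absorption of the logarithmic correction into the slack $\delta$, and then the radial lattice sum. The only cosmetic difference is that the paper computes $\E[(W_1W_2/u)\wedge 1]=u^{-1}+u^{-1}\int_1^u \p[W_1W_2>v]\,dv$ in one stroke via the layer-cake formula, whereas you split into a tail term plus a truncated first moment; and your detour through $\delta/2$ is harmless but unnecessary, since the lattice sum $\sum_{|y|>t}|y|^{-a}\lesssim t^{d-a}$ requires no extra slack.
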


\begin{proof}[of Lemma \ref{good children 0}]
Let $W_1$ and $W_2$ be two independent 
random variables  each 
having a Pareto distribution with parameters $\theta=1$ and 
$\beta>0$. 
For $u \ge 1$ we have, using integration by parts in the first step, 
\begin{eqnarray*}
\E\left[ \frac{ W_1W_2}{u} \wedge 1 \right]
&=&
\frac{1}{u}
+
\frac{1}{u}  \int_1^u \p[W_1W_2> v]dv
~=~
\frac{1}{u} 
+ \frac{1}{u} \int_1^u v^{-\beta}(1+ \beta \log v) dv
\\&\le&
(1+ \beta \log u)\left( u^{-(\beta \wedge 1)} 
+ \frac{1}{u} \int_1^u v^{-\beta}dv\right)
\\&\le&
\max\{1+\log u,1+1_{\{\beta\neq1\}}/|\beta-1|\} \left( 1+ \beta \log u\right ) u^{-(\beta \wedge 1)},
\end{eqnarray*}
where the last step follows by distinguishing between the cases 
$\beta=1$, $\beta > 1$ and $\beta < 1$. 
Choose $t_0$ so large that $\lambda^{-1}t_0^\alpha \ge 1$ which, 
together with the above calculations, 
implies that for all $t\ge t_0$ and 
$x,y\in\Z^d$ with $|x-y|>t\ge t_0$, 
\begin{eqnarray*}
\E\left[\frac{\lambda W_xW_y}{|x-y|^\alpha} \wedge 1\right]
&\le& 
(1+1_{\{\beta\neq1\}}/|\beta-1|)\left( 1+ \max\{1, \beta\} \log (\lambda^{-1}|x-y|^\alpha)\right )^2 \left(\lambda^{-1}|x-y|^\alpha\right)^{-(\beta \wedge 1)}
\\&\le&
|x-y|^{-\alpha(\beta \wedge 1)+\delta},
\end{eqnarray*}
where the second inequality holds for all $|x-y|>t\ge t_0$ with 
$t_0$ large enough. 
It follows that for all $t\ge t_0$, 
using $1-e^{-x}\le x\wedge1$, 
\begin{eqnarray*}
&&\hspace{-0.5cm}
\p\left[\text{there is an occupied edge  in $[0,s-1]^d$ with size larger than $t$} \right] 
~\le~
\sum_{\substack{x,y \in  [0,s-1]^d: \\ |x-y| > t}} 
\E\left[\frac{\lambda W_xW_y}{|x-y|^\alpha} \wedge 1\right]
\\&&\le 
\sum_{\substack{x,y \in  [0,s-1]^d: \\ |x-y| > t}} 
|x-y|^{-\alpha(\beta \wedge 1)+\delta}
~\le~
s^d\sum_{y \in  \Z^d: \; |y| > t} |y|^{-\alpha(\beta \wedge 1)+\delta}.
\end{eqnarray*}
Hence, for an appropriate constant $c_1>0$ and for all $t\ge t_0$ 
with $t_0$ sufficiently large, 
\begin{equation*}
\p\left[\text{there is an occupied edge  in $[0,s-1]^d$ with size larger than $t$} \right]
~\le~
c_1 s^d t^{d-\alpha( \beta \wedge 1)+\delta},
\end{equation*}
which finishes the proof of Lemma \ref{good children 0}. 
\end{proof}

\begin{Lemma}\label{good children}
Assume $\min \{ \alpha, \beta \alpha \} > 2d$. 
For $a_n=n^2$, $n\ge1$, and $a_0$ sufficiently large we have
\begin{equation*}
\sum_{n\ge 0} \p \left[B_{m_n}(0) \text{ is not good }\right] <\infty.
\end{equation*}
\end{Lemma}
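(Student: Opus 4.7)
The plan is to prove recursively that $p_n := \p[B_{m_n}(0) \text{ is not good}]$ satisfies $p_n \le D\,n^{2d}\,m_{n-1}^{-\sigma}$ for all large $n$, where $\sigma \in (0,\,\alpha(\beta \wedge 1) - 2d)$ is fixed (this interval is nonempty by assumption). Since $m_{n-1} = m_0\,((n-1)!)^2$ grows super-exponentially, summability of $\sum_n p_n$ is immediate from such an estimate, and choosing the base $a_0$ sufficiently large will handle the base case.

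By Definition \ref{definition good boxes}, $\{B_{m_n}(0)\text{ not good}\}$ is the union over the $3^d$ shifts $j\in\{-1,0,1\}^d$ of two events: (A) $B_{m_n}(j m_{n-1}/2)$ contains an occupied edge of size exceeding $m_{n-1}/100$; or (B) more than $3^d$ of its $n^{2d}$ children are not good. Lemma \ref{good children 0}, applied with side length $m_n$ and threshold $m_{n-1}/100$ (with $\delta>0$ chosen so that $d-\alpha(\beta\wedge 1)+\delta=-d-\sigma$), bounds event (A) by $c_1'\,n^{2d}\,m_{n-1}^{-\sigma}$, using $m_n^d = n^{2d} m_{n-1}^d$.

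For (B), the central observation is that the goodness of an $(n-1)$-stage box $B_{m_{n-1}}(z)$ is determined by edges inside the enlargement $B_{m_{n-1}}(z)+[-M,M]^d$, where $M=\sum_{k=0}^{n-2}m_k/2 \le m_{n-2}$ by the super-geometric decay of the recursive shifts. Since $2m_{n-2}<m_{n-1}$, two children whose grid positions in $m_{n-1}\Z^d$ are at grid-distance $\ge 2$ have disjoint enlargements and hence independent goodness events (the Pareto weights $W_x$ and, conditional on the weights, the edges in disjoint regions are all independent). Partitioning the $n^{2d}$ children into $2^d$ color classes according to the parity of their grid coordinates, each class consists of pairwise independent children; if more than $3^d$ children are bad, then pigeonhole (using $3^d+1>2^d$) forces some class to contain at least two bad members, giving
\[
\p[\text{(B)}] \;\le\; 2^d\,\binom{n^{2d}}{2}\,p_{n-1}^2 \;\le\; c_2\,n^{4d}\,p_{n-1}^2.
\]
Summing the bounds on (A) and (B) over the $3^d$ shifts produces the recursion $p_n \le c_A\,n^{2d}\,m_{n-1}^{-\sigma} + c_B\,n^{4d}\,p_{n-1}^2$.

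The base case $p_0 \le c_1\cdot 100^{d+\sigma}\,m_0^{-\sigma}$ follows from Lemma \ref{good children 0} with side $m_0=a_0$ and threshold $m_0/100$ provided $a_0 \ge 100\,t_0$. Induction then propagates the bound $p_n \le D\,n^{2d}\,m_{n-1}^{-\sigma}$: the quadratic term in the recursion is of order $n^{8d}\,m_{n-2}^{-2\sigma}$, which is dwarfed by the leading term $n^{2d}\,m_{n-1}^{-\sigma}$ once $n$ is large, because the super-exponential decay of $m_{n-1}^{-\sigma}$ absorbs any polynomial factor. The main obstacle is the independence step for non-adjacent children, which hinges on the sharp geometric bound $M \le m_{n-2} \ll m_{n-1}$; once this is verified, the color-class pigeonhole reduction and the induction both close routinely.
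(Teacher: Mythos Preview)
Your proof follows essentially the same strategy as the paper's: bound the long-edge event by Lemma~\ref{good children 0}, extract two well-separated (hence independently bad) children whenever more than $3^d$ children are bad, and close the resulting quadratic recursion $p_n \lesssim n^{2d}m_{n-1}^{-\sigma}+n^{4d}p_{n-1}^2$ by induction; the paper uses a direct pigeonhole (any $3^d+1$ grid points contain two at $\ell^\infty$-distance $\ge 2$) rather than your parity colouring, and its inductive bound is $\psi_n\le C(n+1)^{-4d}e^{-2n}$ rather than your $Dn^{2d}m_{n-1}^{-\sigma}$, but these are cosmetic differences. One small slip: since $a_1=1$ forces $m_1=m_0$, your claimed inequality $2m_{n-2}<m_{n-1}$ fails at $n=2$, but the actual disjointness requirement $2M\le m_{n-1}$ (with $M=\sum_{k\le n-2}m_k/2=m_0/2$ there) does hold, so the independence step survives and nothing breaks.
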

\noindent
This lemma is the analog in our model to Lemma 1 of \cite{Berger2}
and provides a Borel-Cantelli type of result that eventually the 
boxes $B_{m_n}(0)$ are good, a.s., for all $n$ sufficiently large.

\begin{proof}[of Lemma \ref{good children}]
We prove by induction that 
$\psi_n=\p\left[\text{$B_{m_n}(0)$ is not good} \right]$ 
is summable. 
Choose $\delta \in (0,\alpha(\beta\wedge1)-2d)$ and set 
$\gamma=\min\{\alpha,\beta\alpha\}-2d-\delta>0$. 
For $m_0$ sufficiently large we obtain 
by Lemma \ref{good children 0}, 
\begin{eqnarray}
\psi_0
&=&\notag
\p\left[
\text{there is an occupied edge  in $B_{m_0}(0)$ with size larger than $m_0/100$}
\right]
\\&\le&\label{Equation: psi_0}
c_1m_0^d\left(\frac{m_0}{100}\right)^{d-\alpha(\beta\wedge1)+\delta}
~<~
3^{-d}2^{-4d-1}e^{-2},
\end{eqnarray}
where the last step holds true for $m_0$ sufficiently large. 
Because $B_{m_1}(0)$ has only one child (because $a_1=1$) we get 
for $m_0$ sufficiently large 
\begin{equation}\label{Equation: psi_1}
\psi_1
~\le~
3^d\psi_0
~\le~
c_13^dm_0^d\left(\frac{m_0}{100}\right)^{d-\alpha(\beta\wedge1)+\delta}
~<~
3^{-d}2^{-8d-1}e^{-4}.
\end{equation}
For the induction step we note that $n$-stage box $B_{m_n}(0)$ is 
not good if at least one of the $3^d$ translations 
$B_{m_n}(0+j\frac{m_{n-1}}{2})$, $j\in\{-1,0,1\}^d$, fails to have 
property (a) or (b) of Definition \ref{definition good boxes}. 
Using translation invariance and Lemma \ref{good children 0} 
we get  for all $n\ge2$ 
and for all $m_0$ sufficiently large, 
set $c_2= c_1100^{\alpha(\beta\wedge1)-d-\delta}$,
\begin{equation*}
\psi_n
\le 3^d
\left(
c_2a_n^{\alpha(\beta\wedge1)-d-\delta}m_n^{-\gamma}
+
\p\left[
\text{there are at least $3^d+1$ children of $B_{m_n}(0)$ that are not good}
\right]
\right).
\end{equation*}
Note that the event in the probability above ensures that there are at least 
two children $B_{m_{n-1}}(y)$ and $B_{m_{n-1}}(z)$ of 
$B_{m_n}(0)$ that are not good and are separated by at least 
Euclidean distance $2m_{n-1}$. Therefore, using $m_i=a_0(i!)^2$, $i\ge0$, 
the two boxes $B_{m_{n-1}}(y)$ and $B_{m_{n-1}}(z)$ are well separated in the 
sense that the events $\{\!\text{$B_{m_{n-1}}\!(y)\!$ is not\! good}\}$ 
and $\{\text{$B_{m_{n-1}}(z)$ is not good}\}$ are independent. 
Note that for the latter we need to make sure that 
$B_{m_{n-1}}(y+jm_{n-2}/2)$ and $B_{m_{n-1}}(z+lm_{n-2}/2)$ 
are disjoint for all $j,l \in \{-1,0,1\}^d$, which is the case because 
$B_{m_{n-1}}(y)$ and $B_{m_{n-1}}(z)$ have at least 
distance $2m_{n-1}$. 
The independence implies the following bound 
\begin{eqnarray*}
\psi_n
&\le&3^d
\left(
c_2a_n^{\alpha(\beta\wedge1)-d-\delta}m_n^{-\gamma}
+
\binom{a_n^d}{2} \psi_{n-1}^2
\right)
~\le~3^d
\left(
c_2a_n^{\alpha(\beta\wedge1)-d-\delta}m_n^{-\gamma}
+a_n^{2d} \psi_{n-1}^2
\right)
\\&=&3^d
\left(
c_2n^{2(\gamma+d)}\left(m_0 (n!)^2\right)^{-\gamma}
+
n^{4d} \psi_{n-1}^2
\right)
~=~
c_23^d m_0^{-\gamma}n^{2(\gamma+d)}
(n!)^{-2\gamma}
+
3^d n^{4d} \psi_{n-1}^2.
\end{eqnarray*}
It follows that there is $n_0<\infty$ such that for all for all $n\ge n_0$ and $m_0$ large enough
\begin{equation}\label{Equation: n_0}
\psi_n 
~\le~
3^{-d}2^{-4d-2}e^{-2}(n+1)^{-4d}e^{-2n}
+
3^d n^{4d} \psi_{n-1}^2,
\end{equation}
and we can choose $m_0$ so large 
that \eqref{Equation: n_0} holds true also for all $2\le n < n_0$. 
We claim that for all $a_0=m_0$ sufficiently large and all $n\ge0$, 
\begin{equation}\label{Equation: induction}
\psi_n 
~\le~
3^{-d}2^{-4d-1}e^{-2}(n+1)^{-4d}e^{-2n},
\end{equation}
which will imply Lemma \ref{good children} because 
the right-hand side is summable. 
Indeed, \eqref{Equation: induction} is true for $n\in\{0,1\}$ by \eqref{Equation: psi_0} 
and \eqref{Equation: psi_1}. 
Assuming that \eqref{Equation: induction} holds for all $n-1$ 
with $n\ge2$ we get, using \eqref{Equation: n_0}, 
\begin{eqnarray*}
\psi_n
&\le&
3^{-d}2^{-4d-2}e^{-2}(n+1)^{-4d}e^{-2n}
+
3^d n^{4d} \psi_{n-1}^2 
\\&\le&
3^{-d}2^{-4d-2}e^{-2}(n+1)^{-4d}e^{-2n}
+
3^{-d} n^{-4d}2^{-8d-2}e^{-4}e^{-4n+4}
\\&=&
3^{-d}2^{-4d-1}e^{-2}(n+1)^{-4d}e^{-2n}
\left(2^{-1}+
\left(\frac{n+1}{n}\right)^{4d}
2^{-4d-1}e^{-2n+2}
\right)
\\&\le&
3^{-d}2^{-4d-1}e^{-2}(n+1)^{-4d}e^{-2n}
\left(2^{-1}+
2^{-1}
\right),
\end{eqnarray*}
where the last step follows since $(n+1)/n\le2$ 
and $e^{-2n+2}\le1$. 
\end{proof}

The following lemma is the analog of Proposition 3 of \cite{Berger2} and it depends on 
Lemma 2 of \cite{Berger2} and Lemma~\ref{good children}.
Since its proof is completely similar to the one of Proposition 3 of \cite{Berger2}
once Lemma \ref{good children} has been established we skip this proof.
\begin{Lemma}[Proposition 3 of \cite{Berger2}]
\label{minimal length of paths 2}
Choose   $a_n=n^2$ for $n\ge 1$.
There exists a constant $c_3>0$ such that for
every $n$ sufficiently large,
if  for every $j\in \{-1,0,1\}^d$ the $n$-stage box
$B_{m_n}\left(0+j\frac{m_{n}}{2}\right)$
is good and for every $l>n$ the $l$-stage boxes $\widehat{B}_{m_l}$ centered at 
$B_{m_n}(0)$ are good,
then if $x,y\in B_{m_n}(0)$ satisfy $|x-y|>m_n/8$ then 
$d(x,y)\ge c_3|x-y|$.
\end{Lemma}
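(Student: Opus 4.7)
The plan is to mimic the proof of Proposition 3 of \cite{Berger2} essentially verbatim. Our Definition \ref{definition good boxes} is formulated purely in terms of the lengths of occupied edges and the hierarchical count of non-good children, with no direct dependence on the weights $(W_x)_{x\in\Z^d}$; consequently the geometric and combinatorial skeleton of Berger's argument transfers to our setting unchanged, provided that the only probabilistic input required, namely the summability of $\sum_n\p[B_{m_n}(0)\text{ not good}]$, is available. That input is exactly Lemma \ref{good children}, which has already been established.

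The core idea I would use is that goodness at all levels $\ge n$ yields a hierarchical control on the maximal length of any occupied edge in a neighborhood of $B_{m_n}(0)$: every such edge lies in some smallest enclosing good box, and property (a) of Definition \ref{definition good boxes} caps its length by $1/100$ times the size of the child boxes of that stage. Complementarily, property (b) at every level guarantees that at most $3^d$ children of any good box are non-good. These two features together permit a recursive edge-counting argument on the scales $m_n$, carried out by induction on the stage.

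For the concrete execution I would take a path $\pi$ from $x$ to $y$ with $x,y\in B_{m_n}(0)$ and $|x-y|>m_n/8$, and decompose $\pi$ into sub-paths inside the children of $B_{m_n}(0)$ bridged by short edges whose length is controlled by property (a). The translated family $\{B_{m_n}(0+j m_{n-1}/2)\}_{j\in\{-1,0,1\}^d}$ is then used to ensure that every pair of points separated by at least $m_{n-1}/8$ lies inside a common good $n$-stage box, so that the induction hypothesis applies to each such sub-path on the finer scale. Summing the resulting linear lower bounds over all sub-paths in good children, and absorbing the at most $3^d$ bad children per level and the bridging edges into a slightly smaller effective constant, yields $d(x,y)\ge c_3|x-y|$ with a $c_3>0$ independent of $n$.

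The main obstacle, exactly as in \cite{Berger2}, is preventing $c_3$ from degenerating across the induction: the at most $3^d$ bad children at each level must contribute only a controlled additive loss to the Euclidean distance being accounted for, and the bridging edges must be charged carefully. This is handled by choosing $c_3$ sufficiently small at the base of the induction and by exploiting the hypothesis $|x-y|>m_n/8$, which forces the path to traverse many good children so that the bounded additive losses become negligible in comparison. Since none of these manipulations depend on the weights $(W_x)$, and Lemma \ref{good children} plays the role of Lemma 1 of \cite{Berger2}, the original argument transports to our setting with only cosmetic changes, and we may therefore simply invoke it.
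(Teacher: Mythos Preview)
Your proposal is correct and follows essentially the same approach as the paper: both observe that the statement is the analog of Proposition 3 of \cite{Berger2}, that Definition \ref{definition good boxes} is purely a condition on edge lengths and hierarchical child counts (hence independent of the weights), and that the deterministic recursive argument from \cite{Berger2} therefore carries over verbatim with Lemma \ref{good children} playing the role of Lemma 1 of \cite{Berger2}. The paper in fact skips the proof entirely for precisely these reasons, so your sketch of the induction mechanism is already more than what the paper provides.
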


\begin{proof}[of Theorem \ref{chemical distance} (b2)]
Lemma \ref{good children} says that, a.s., the $l$-stage boxes 
$\widehat{B}_{m_l}$ are eventually good for all $l\ge n$.
Moreover, from Lemma \ref{minimal length of paths 2} we obtain the linearity
in the distance for these good boxes which says that, a.s., for $n$ sufficiently large and
$|x|>m_n/8$ we have $d(0,x)\ge c_3|x|$.
\end{proof}


~

{\bf Acknowledgment.} We thank Noam Berger and Remco van der Hofstad
for helpful discussions and comments. 
R.~S.~H.~acknowledges support from Swiss National Science Foundation, grant 132909.

\bibliographystyle{abbrv}
\bibliography{References}

\end{document}